\begin{document}
   \baselineskip   .7cm
\newtheorem{defn}{Definition}[section]
\newtheorem{thm}{Theorem}
\newtheorem{propo}[defn]{Proposition}
\newtheorem{cor}[defn]{Corollary}
\newtheorem{lem}[defn]{Lemma}
\theoremstyle{remark}
\newtheorem{rem}{Remark}[section]
 \newtheorem{ex}[defn]{Example}
\renewcommand\o{{{\mathcal O}}}
\newcommand\s{\sigma}
\newcommand\w{\widehat}
\newcommand\cal{\mathcal}

\title [A calculation of $L$-series in terms of Jacobi sums]
{A calculation of $L$-series in terms of Jacobi sums}

\author[A. \'Alvarez]{A. \'Alvarez${}^*$     }
\address{Departamento de Matem\'aticas  \\
Universidad de Salamanca \\ Plaza de la Merced 1-4. Salamanca
(37008). Spain.}

\thanks{MSC:   11G20, 14G10  \\$*$  Departamento de Matem\'aticas.
Universidad de Salamanca. Spain.}

\maketitle

  \begin{abstract}  Let us consider a cyclic extension of a function field   defined  over a finite field. For a character (non-trivial) of this extension, we calculate, as a linear combinations of products of Jacobi sums, the coefficients of the polynomial given by its Dirichtlet $L$-series.
 In the last section we show applications of this calculation.

 \end{abstract}

\tableofcontents


\section{Introduction  }

In \cite{W} it is proved that the Hasse-Weil zeta function for   Fermat curves     has an analytic continuation to the whole complex plane.  It is possible because   these curves are   abelian  coverings  of the projective line, ramified at three points, and hence  it is deduced that the $p$-local terms  of theirs Hasse-Weil  zeta functions are  $L$-series  associated with  a  Hecke characters  which came from a Jacobi sums.

If we have cyclic ramified coverings of a smooth curve, defined over finite fields, then the $L$-series associated with the characters (non trivial) for these coverings are polynomials.
In \cite
{D}, it is proved that the constant term  of these polynomials is given by Jacobi sums.
By using an elementary method, in \cite{A}  another proof of this result can be obtained.

In this article we calculate, in terms of Jacobi sums, the coefficients of these polynomials. We shall show that these coefficients are explicitly  given by linear combinations of products of Jacobi sums.

In the last section, we provide some applications of this calculation. Thus, we find formulas similar   to the Deuring polynomial and those of \cite{B} in Chapter 5 but in  characteristic $0$. Moreover, in characteristic $p$ these formulas are more explicit. This is because  Jacobi sums are   combinatorial numbers in  mod $p$. From these calculations, we associate with each finite field ${\Bbb F}_q$
a Hermitian form such that its rank and its index determine the amount of $\lambda \in {\Bbb F}_q$, for which elliptic curves $y^2=x(x-1)(x-\lambda)$ are supersingular and the cardinal of theirs rational points are greater than $q+1$, respectively.

\section{Previous notation and preliminaries }
Let us consider the primitive $n$-root of the unity  $\epsilon_n$ and a prime ideal,  ${\mathfrak p}\subset {\Bbb Z}[\epsilon_n]$,   over the  prime    $p$, prime to $n$, with residual field
${\Bbb F}_q$,  a finite field of  $q=p^h$ elements. Note that in this case $n$ divides $q-1$.

Let $Y,X$ be   smooth, proper and geometrically irreducicle curves over the finite field   ${\Bbb F}_q$ and let $\Sigma_{Y } $, $\Sigma_{X } $ be the function fields of $Y $ and $X $, respectively. Now,  let us  consider $Y \to X $, a  ramified Galois covering with Galois group $G:={\Bbb Z}/n$,  ramified at $(d+1)$-different rational points of $X$, $T:=\{  {x}_0,\cdots,{  x}_d\}$. We have that $\Sigma_{Y }=\Sigma_{X }(\sqrt[n]{ f })$. Let ${\rm{div}}(f )$ be the principal divisor associated with $f \in \Sigma_{X }$. We can choose $f$ such that
${\rm{div}}(f)=n_0\cdot x_0+\cdots+n_d\cdot x_d+n\cdot D$, with $0<  n_i  <n$, and where $D$  is a divisor on $X$  (note that $n_0  +\cdots+n_d \equiv 0\,{\rm{mod}}\, n$). We denote by $k(x_i)$ the residual field of $x_i$.

 We consider the effective divisors  $E:=  x_0+x_1+\cdots+  x_d$, $E':=  x_1+\cdots+  x_d$  and by $\mathfrak m$, $\mathfrak m'$ the ideals in  ${\mathcal O}_{X }$ associated with $E$ and $E'$, respectively. We also denote $T':=\{  {x}_1,\cdots,{  x}_d\}$. An $\mathfrak m$-level structure over $X$ is a pair $(L,\iota_\mathfrak m)$, where $L$ is a line bundle over $X$ and $\iota_\mathfrak m:L\to {\mathcal O}_{X}/{\mathfrak m}$ is an surjective morphism of ${\mathcal O}_{X }$-modules. We say that two level structures $(L,\iota_\mathfrak m)$ and $(L',\iota'_\mathfrak m)$ are isomorphic  when there exists an isomorphism of ${\mathcal O}_X$-modules $ \tau:L\to L'$ such that $\iota'_\mathfrak m\cdot \tau =\iota_\mathfrak m$.

 It is not hard to see that   the tensor product defines a group law for the level structures. We denote by ${\rm{Pic}}^0_{X,\mathfrak m  }({\Bbb F}_q)$ the group of level structures $(L,\iota_\mathfrak m)$, where ${\rm{deg}}(L)=0$. Moreover, by considering the morphism of forgetting  $\iota_\mathfrak m$, $\theta(L,\iota_\mathfrak m)= L$, we have that
 $${\rm{Ker}}(\theta)=({\mathcal O}_X/{\mathfrak m})^\times/{\Bbb F}_q^\times$$
  and hence,
we obtain an exact sequence of groups
 $$1\to k(  x_0)^\times \times\cdots\times k(  x_d)^\times/{\Bbb F}_q^\times\overset \eta \to {\rm{Pic}}^0_{X,{\mathfrak m} }({\Bbb F}_q)\overset \theta\to   {\rm{Pic}}^0_{X }({\Bbb F}_q) \to 1.$$

 By class field theory, \cite{S} VI, since $   \Sigma_{X }(\sqrt[n]{ f })/\Sigma_{X }$ is a cyclic extension of group $G$, ramified at $T$, there exists a surjective morphism of groups $\rho:{\rm{Pic}}^0_{X_{\mathfrak m} }({\Bbb F}_{q})\to G$. If $g\in {\rm{Pic}}^0_{X,{\mathfrak m} }({\Bbb F}_{q})$, then $\rho(g) (\sqrt[n]{ f })=\lambda\cdot (\sqrt[n]{ f })$ with $\lambda \in {\Bbb F}_{q}^\times$. We define the character $\chi_f$ of $ {\rm{Pic}}^0_{X,{\mathfrak m} }({\Bbb F}_{q})$ by $\chi_{f}(g)=:\chi_{q}^{\frac{q-1}{n}}(\lambda)$, $\chi_{q}$ being the Teichmuller character:
if $\lambda \in {\Bbb F}_{q}$, then we set the $(q-1)$-root of the unity, $\chi_{q} (\lambda)\in {\Bbb Z}[\epsilon_{q-1}]$, by the condition
 $\chi_{q} (\lambda)\equiv\lambda{\text{ mod } }\mathfrak p'$, by fixing $\mathfrak p'\subset {\Bbb Z}[\epsilon_{q-1}] $,  a prime over $\mathfrak p$. Note that  ${\Bbb Z}[\epsilon_n]\subset{\Bbb Z}[\epsilon_{q-1}]$.
 We have that $\chi_{q}$ does not depend on the choice of  $\mathfrak p'$.

 If we consider the injective morphism
 $$\eta: k(  x_0)^\times \times\cdots\times k(  x_d)^\times/{\Bbb F}_q^\times\to {\rm{Pic}}^0_{X,{\mathfrak m} }({\Bbb F}_q)$$
 then, by denoting $a_i:=n_i \frac{q-1}{n}$, $(0\leq i\leq d)$, we have that
 $\chi_f(\eta(z_0,\cdots,z_d))=\chi^{- a_0}_{q}(z_0)\cdots \chi^{- a_d}_{q}(z_d). $

Now,  we shall build a section for the morphism $\theta:{\rm{Pic}}^0_{X,{\mathfrak m} }({\Bbb F}_q) \to  {\rm{Pic}}^0_{X }({\Bbb F}_q)$:
 Let $\pi_{\mathfrak m}: {\mathcal O}_{X }\to {\mathcal O}_{X }/{\mathfrak m}$ be the natural epimorphism.
Let  $D$ be a divisor on $X$. We shall fix an ${\mathfrak m}$-level structure  for the line bundle  ${\mathcal O}_{X }(D)$.
 If $D$  is an effective divisor on $X$   with support outside $T$    then by considering the natural morphisms ${\mathcal O}_{X  }(-D)\to {\mathcal O}_{X  }$, ${\mathcal O}_{X  }\to {\mathcal O}_{X  }( D)$ and the level structure $({\mathcal O}_{X },\pi_{\mathfrak m})$  we obtain level structures   $ ({\mathcal O}_{X  }(\pm D),\pi_{\mathfrak m}^{\pm D}) $. Similarly, if $D'=D_1'-D_2'$ is a divisor with support away of $T$ and $D_1$, $D_2$ ares effective divisors,  then $({\mathcal O}_{X  }(D_1'),\pi^{  D_1'})\otimes ({\mathcal O}_{X  }(-D_2'), \pi_{\mathfrak m}^{  -D_2'})$ gives a  level structure for $ {\mathcal O}_{X  }(D )$.

 Let $\mathfrak m_{x_0}$ be the ideal associated with $x_0$. We now consider $  D  =r\cdot x_0+  K $ with     $x_0\notin \rm{supp}(K) $ and we set $t_{x_0}$ a local parameter for $x_0$. As above, we obtain an $x_0$-level structure, $\pi^{\overline D}_{\mathfrak m_{x_0} }$, for $t_{x_0}^{-r}\cdot {\mathcal O}_X(D )\simeq {\mathcal O}_X(\overline D )$ since that $ x_0\notin\rm{supp}(\overline D)$. Thus, from the isomorphisms of
 ${\mathcal O}_{X}$-modules
 ${\mathcal O}_{X}(D )\overset{t_{x_0}^{-r}}\simeq t_{x_0}^{-r}\cdot {\mathcal O}_{X}(D)$ and from the $\mathfrak m_{x_0}$-level structure for ${\mathcal O}_{X}(\overline D )$,   we obtain an epimorphism   $\pi_{\mathfrak m_{x_0}}^{  D}:{\mathcal O}_{X}(D)\to {\mathcal O}_{X  }/\mathfrak m_{x_0}$. For each  divisor $F$ on $X\setminus T'$ we denote $l^F_{\mathfrak m_{x_0}}:=(\mathcal O_{X}(F),\pi^F_{\mathfrak m_{x_0}}\times \pi^F_{\mathfrak m'})$.

{\bf{Example 1.}} If we consider ${X}={\Bbb P}_1$, with $\Sigma_{{\Bbb P}_1}={\Bbb F}_q(x)$, ${\rm{  div}}(x  )=0-\infty$ and  the local parameter $x^{-1}$ for $\infty$, then the $\infty$-level structure $l_\infty^{r\cdot \infty}  =({\mathcal O}_{{\Bbb P}_1}(r\cdot \infty),\pi_\infty^{r\cdot \infty})$  is given  by
 $\pi_{ \infty}^{r\cdot \infty}:{\mathcal O}_{{\Bbb P}_1}(r\cdot \infty)\to {\mathcal O}_{{\Bbb P}_1}/ \mathfrak m_\infty$,
    with $\pi_\infty^{r\cdot \infty}(x^r)=1$.  Recall that $H^0({\Bbb P}_1 ,{\mathcal O}_{{\Bbb P}_1}(r\cdot \infty ) ) ={\Bbb F}_q\cdot 1\oplus {\Bbb F}_q\cdot x  \oplus \cdots  \oplus{\Bbb F}_q\cdot x^r$.
\bigskip

Let $L$ be a line bundle on $X $. By taking account the identification
 ${\rm{Hom}_{{\mathcal O}_{X  }}}({\mathcal O}_{X  }, L)=H^0(X ,L ) $,   we can consider the global sections of $L$ as ${\mathcal O}_{X  } $-module morphisms ${\mathcal O}_{X  }\to L$.
We define the   space of $\mathfrak m$-sections of a level structure, $(L,\iota_\mathfrak m)$,  as the subset   $H^0_\mathfrak m(X,(L,\iota_\mathfrak m))\subset H^0(X ,L ) $,    of global sections of $L$, $s:{\mathcal O}_{X  }\to L$ such that $\iota_{\mathfrak m}\cdot s=\pi_\mathfrak m$.  We have that if $s,s'\in H^0_\mathfrak m(X,(L,\iota_\mathfrak m))$ then $s-s'\in  H^0(X ,L(-E) ) $. Thus, $H^0_\mathfrak m(X,(L,\iota_\mathfrak m))=s+H^0(X ,L(-E) )$. We denote $ h^0_\mathfrak m (L,\iota_\mathfrak m ):=\#H^0_\mathfrak m(X,(L,\iota_\mathfrak m))$. Note that either $h^0_\mathfrak m (L,\iota_\mathfrak m )=0$ or $h^0_\mathfrak m(L,\iota_\mathfrak m )=q^{h^0(L(-E) )}  $.

 \section{L-series}

Let $F_x$ be the Frobenius element for $x\in X  \setminus T$. We consider the $L$-series associated with the character $\chi_f$,
$$L(X,\chi_f,t)=\underset{x\in \vert X \vert}\prod (1-\chi_f(F_x)\cdot t^{deg(x)})^{-1},$$
$\vert X \vert$ denotes the geometric point within $X$. If $x\in T$ then we set $\chi_f(F_x)=0$.

We consider a divisor of degree $1$, $D_1$ on $X$ and we denote $L(i):=L\otimes_{{\mathcal O}_X}{\mathcal O}_X(i\cdot D_1)$. In \cite[3]{Al}, by  using    \cite[4.1.1]{A},  this $L$-series is calculated in terms of the $\mathfrak m$-level structures

 $$ (*) L(X,\chi_f,t)=   \underset{i=0}{\overset
{ 2g+d-1}  \sum}  [\sum_{(L,\iota_\mathfrak m)}     h^0_{\mathfrak m} (L(i),\iota_{\mathfrak m}) \cdot\chi_f( {\rho(L,\iota_{\mathfrak m})})]\cdot t^{i}  .$$
The second sum is over all level structures classes $(L,\iota_\mathfrak m)\in {\rm{Pic}}^0_{X,{\mathfrak m} }({\Bbb F}_q)$.

Note that the zeta function of $Y$ is
$$Z(Y, t)= [(1-t)(1-q\cdot t)]^{-1} \prod_{j=1}^{n-1}L(t,\chi^j_f) .$$

If $\lambda \in {\Bbb F}_q^\times$ and $(L ,\iota_\mathfrak m)$ is a $\mathfrak m$-level structure then $(L ,\iota_\mathfrak m)$ and $(L ,\lambda\cdot \iota_\mathfrak m)$ are isomorphic level structures. Moreover, as  ${\mathcal O}_{X  }/\mathfrak m\simeq k(x_0)\times {\mathcal O}_{X  }/\mathfrak m'$  we have that $\iota_\mathfrak m   =\iota_{x_0}\times \iota_{\mathfrak m'} $
 with $\iota_{x_0}:{\mathcal O}_{X  }\to  k(x_0)$ and $\iota_{\mathfrak m'}:{\mathcal O}_{X  }\to {\mathcal O}_{X  }/\mathfrak m'$   epimorphisms. Thus, in the isomorphism class of a level structure  $(L ,\iota_\mathfrak m)$   we can choose the element $(L ,\pi^D_{x_0}\times \iota_{\mathfrak m'})$, with $L \simeq {\mathcal O}_{X  }(D)$, and therefore  we can fix the $x_0$-level structure in the classes of ${\mathfrak m}$-level structures. Let $F$ be either $E$ or $E'$.
We denote $O_{F}:=H^0(X,\frac{\mathcal O_X}{\mathcal O_X(-F)}) $.
Since we fix  the $x_0$-level structure, the subgroup $1\times {  O}_{E'}^\times \subset {  O}_{E}^\times$ acts transitively on the classes of $\mathfrak m$-level structures of the fiber  $\theta^{-1}(L )$. Recall that
$\theta:{\rm{Pic}}^0_{X,{\mathfrak m} }({\Bbb F}_q) \to   {\rm{Pic}}^0_{X }({\Bbb F}_q)$  is the morphism of forgetting level structures.

Let $J$ be a divisor on $X\setminus T'$. We consider the level structure $ l^J_\mathfrak m :=(\mathcal O_X(J),\pi^J_\mathfrak m)$,  fixed in the above section. We denote    $$H^J_\mathfrak m:=\pi^{J}_\mathfrak m(\frac{H^0(X,{\mathcal O}_{X  }(J))}{H^0(X,{\mathcal O}_{X  }(J-E) )})\subseteq {  O}_E.$$

 \begin{propo}\label{series} We have
 $$ L(X,\chi_f,t)=   \underset{i=0}{\overset
{ 2g+d-1}  \sum}  [\sum_{[D]}      q^{h^0  ({\mathcal O}_{X  }(D+i\cdot D_1-E))} \cdot\chi_f( {\rho( l^D_{\mathfrak m})})\sum_{u }\chi_f(\eta(1\times u ))]\cdot t^{ i}   .$$
The second sum is over all the classes (with the algebraic equivalence) of divisors of degree $0$, $D$,     on $X\setminus T'$ and the third sum is over all $u\in {  O}^\times_{E'}$ with $1\times u^{-1}\in  H_\mathfrak m^{D+i\cdot D_1} \subset O_E$.
 \end{propo}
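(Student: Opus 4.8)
The plan is to start from the level-structure expansion $(*)$ and to reorganize its inner sum $\sum_{(L,\iota_\mathfrak m)} h^0_\mathfrak m(L(i),\iota_\mathfrak m)\,\chi_f(\rho(L,\iota_\mathfrak m))$ according to the fibres of the forgetting morphism $\theta:{\rm Pic}^0_{X,\mathfrak m}(\Bbb F_q)\to {\rm Pic}^0_X(\Bbb F_q)$. Every class in ${\rm Pic}^0_X(\Bbb F_q)$ has a representative $\mathcal O_X(D)$ with $D$ a degree-$0$ divisor supported on $X\setminus T'$, so the outer index $[D]$ will run over such classes, while the fibre $\theta^{-1}(\mathcal O_X(D))$ will be parametrized internally.

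First I would pin down the fibres. After fixing the $x_0$-part of the level structure (as explained before the statement, each isomorphism class contains a representative of the form $(\mathcal O_X(D),\pi^D_{x_0}\times\iota_{\mathfrak m'})$), the subgroup $1\times O_{E'}^\times\subset O_E^\times$ acts transitively on $\theta^{-1}(\mathcal O_X(D))$. I would first record that this action is in fact simply transitive: since ${\rm Ker}(\theta)=O_E^\times/\Bbb F_q^\times$ and dividing out the $k(x_0)^\times$-coordinate gives $1\times O_{E'}^\times\cong O_E^\times/\Bbb F_q^\times$, the fibre $\theta^{-1}(\mathcal O_X(D))$ is a torsor under $1\times O_{E'}^\times$ with base point $l^D_\mathfrak m$. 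Hence every level structure in the fibre is uniquely $\eta(1\times u)\cdot l^D_\mathfrak m$ for some $u\in O_{E'}^\times$, where $\eta(1\times u)$ acts through the group law, i.e. by tensoring the level map with the unit $1\times u$, producing the level structure $(\mathcal O_X(D),(1\times u)\cdot\pi^D_\mathfrak m)$.

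Next I would split the two factors contributing to each term. Since $\chi_f$ is a character of ${\rm Pic}^0_{X,\mathfrak m}(\Bbb F_q)$, multiplicativity gives $\chi_f(\eta(1\times u)\cdot l^D_\mathfrak m)=\chi_f(\eta(1\times u))\cdot\chi_f(\rho(l^D_\mathfrak m))$, which isolates the $u$-independent factor $\chi_f(\rho(l^D_\mathfrak m))$ and the factor $\chi_f(\eta(1\times u))$ appearing in the third sum. The remaining and most delicate point is to evaluate $h^0_\mathfrak m(\mathcal O_X(J),(1\times u)\cdot\pi^J_\mathfrak m)$, where $J:=D+i\cdot D_1$ is supported off $T'$ and the level structure is transported along $\mathcal O_X(D)\to\mathcal O_X(J)$ using the identifications set up in the preliminaries. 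By the dichotomy recalled before the statement, this number is either $0$ or $q^{h^0(\mathcal O_X(J-E))}$; it is nonzero precisely when some global section $s$ of $\mathcal O_X(J)$ satisfies the $\mathfrak m$-section condition for the twisted level map, that is $(1\times u)\cdot\pi^J_\mathfrak m(s)=1$ in $O_E$, equivalently $\pi^J_\mathfrak m(s)=1\times u^{-1}$. Reading this through the definition $H^J_\mathfrak m=\pi^J_\mathfrak m\bigl(H^0(X,\mathcal O_X(J))/H^0(X,\mathcal O_X(J-E))\bigr)$, the condition becomes exactly $1\times u^{-1}\in H^{D+i\cdot D_1}_\mathfrak m$, and the value is then the $u$-independent quantity $q^{h^0(\mathcal O_X(D+i\cdot D_1-E))}$.

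Finally I would reassemble. Summing over $[D]$ and over the fibre coordinate $u$, the power of $q$ factors out of the $u$-sum because it does not depend on $u$, yielding for the coefficient of $t^i$
$$\sum_{[D]} q^{h^0(\mathcal O_X(D+i\cdot D_1-E))}\,\chi_f(\rho(l^D_\mathfrak m))\sum_{u}\chi_f(\eta(1\times u)),$$
the $u$-sum being restricted by $1\times u^{-1}\in H^{D+i\cdot D_1}_\mathfrak m$, which is precisely the claimed formula. The main obstacle is the bookkeeping of the third paragraph: transporting the fixed level structure along the twist by $i\cdot D_1$, tracking the inversion $u\mapsto u^{-1}$ that emerges from solving the $\mathfrak m$-section equation, and checking that the dichotomy $h^0_\mathfrak m\in\{0,q^{h^0(L(-E))}\}$ applies verbatim to the twisted level structures. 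Once these are in place, the group-theoretic steps (simple transitivity of $1\times O_{E'}^\times$ and multiplicativity of $\chi_f$) are routine.
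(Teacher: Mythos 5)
Your proposal follows essentially the same route as the paper: both decompose the sum in $(*)$ along the fibres of $\theta$, fix the $x_0$-component so that the fibre over $\mathcal{O}_X(D)$ is parametrized by $u\in O_{E'}^\times$ acting on $l^{D+iD_1}_{\mathfrak m}$, use multiplicativity of $\chi_f$ to extract $\chi_f(\rho(l^D_{\mathfrak m}))$, and characterize the nonvanishing of $h^0_{\mathfrak m}((1\times u)\cdot l^{D+iD_1}_{\mathfrak m})$ by solving $\pi^{D+iD_1}_{\mathfrak m}(s)=1\times u^{-1}$, i.e. by the condition $1\times u^{-1}\in H^{D+i\cdot D_1}_{\mathfrak m}$. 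Your explicit remark that the action of $1\times O_{E'}^\times$ is simply transitive (so each level structure in the fibre is counted exactly once) is a small but welcome sharpening of the paper's "acts transitively."
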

 \begin{proof}By considering the  level structure $l^{D+iD_1}_\mathfrak m$    and bearing in mind that if
 $({\mathcal O}_{X  }(D)(i ), \pi_{x_0}^{D+iD_1} \times \iota_{\mathfrak m'})$ is another level structure  for  ${\mathcal O}_{X  }(D)(i )$, then there exists $u\in {  O}^\times_{E'}$ with $  \pi_{x_0}^{D+iD_1} \times \iota_{\mathfrak m'}  = (1\times u)\cdot \pi^{D+iD_1}_\mathfrak m$. We have that the $L$-series (*)  is equal to

  $$\underset{i=0}{\overset
{ 2g+d-1}  \sum}  [\sum_{[D]}      q^{h^0  ({\mathcal O}_{X  }(D+i\cdot D_1-E))} \cdot\chi_f( \rho(l^{D }_\mathfrak m))\sum_{u}\chi_f(\eta(1\times u ))]\cdot t^{ i}   ,$$
 where the third sum is over all   $u\in {  O}^\times_{E'}$ with $h^0_\mathfrak m(  (1\times u)\cdot l^{D+iD_1}_\mathfrak m)\neq 0$, and in these cases, $h^0_\mathfrak m(  (1\times u)\cdot l^{D+iD_1}_\mathfrak m)=
 q^{h^0( {\mathcal O}_{X  }(D+i\cdot D_1-E)   )}$.
   Note  that we have fixed the $x_0$-level structure  in the classes of $\mathfrak m$-level structures.

 To conclude, it suffices to prove that $h^0_\mathfrak m(  (1\times u)\cdot l^{D+iD_1}_\mathfrak m)\neq 0$   if and only if $1\times u^{-1}\in  H_\mathfrak m^{D+i\cdot D_1} $.
 If $h^0_\mathfrak m((1\times u)\cdot l^{D+iD_1}_\mathfrak m)\neq 0$ then there exists $s\in  H^0(X,{\mathcal O}_{X  }(D)(i))$ such that the ${\mathcal O}_X$-module morphism $s:{\mathcal O}_X\to {\mathcal O}_{X  }(D)(i)$ satisfies  $(1\times u)\cdot \pi_\mathfrak m^{D+iD_1} \cdot s=\pi_\mathfrak m$. Therefore, $(1\times u)\cdot\pi_\mathfrak m^{D+iD_1}(s)=1$. Conversely, if $1\times u^{-1}\in  H_\mathfrak m^{D+i\cdot D_1} $ there exists  $s\in  H^0(X,{\mathcal O}_{X  }(D)(i))$
 with $1\times u^{-1}=\pi^{D+iD_1}_\mathfrak m(s)$. Thus, $s\in  H^0_\mathfrak m( X, (1\times u)\cdot l^{D+iD_1}_\mathfrak m)$ and we conclude.
 \end{proof}

   {\bf{Example 2}}. As in Example 1, we consider $X={\Bbb P}_1$, the line bundle $ \mathcal O_{{\Bbb P}_1}(r\cdot \infty)$, $E:=\infty+x_1+\cdots+x_d$, with $ r\leq d-1 $, and   $f$ the polynomial of degree $d$, $p(x)$, where $x_1,\cdots,x_d$ are given by the roots  of this polynomial. We consider the $\mathfrak m $-level structure $l^{r\cdot \infty}_\mathfrak m$;  recall that over $\mathfrak m'$ this level structure is obtained from the natural injective morphism $\mathcal O_{{\Bbb P}_1} \to \mathcal O_{{\Bbb P}_1}(r\cdot \infty)$ and over $\infty$ it is given by $ \pi_\infty^{r\cdot \infty} $ by considering $\pi_\infty^{r\cdot \infty}(x^r)=1$(Example 1).
In this way, the subspace
   $$H^{r\cdot \infty}_\mathfrak m:=\pi^{r\cdot \infty}_\mathfrak m(\frac{H^0({\Bbb P}_1,\mathcal O_{{\Bbb P}_1}(r\cdot \infty))}{H^0({\Bbb P}_1,\mathcal O_{{\Bbb P}_1}(r\cdot \infty-E))})\subseteq O_E={\Bbb F}_q \times \frac{{\Bbb F}_q[x]}{p(x)}  $$
    is $\{((\frac{h(x)}{x^r})_{x=\infty},h(x)){\text{ with }} {\rm{ deg}}(h(x))\leq r \}.$
Therefore, $1\times u(x) \in  H^{r\cdot \infty}_\mathfrak m$ if and only if $deg(u(x) )=r$ and $u(x)$ is monic.


\section{Jacobi sums and L-series}

Let us consider the linear form   $\omega_d(z)=\gamma_1\cdot  z_1+\cdots +\gamma_d \cdot z_d+\gamma$, where $\gamma_1,\cdots,\gamma_d, \gamma\in {\Bbb F}_q$ and    $z:=(z_1,\cdots,z_d)\in {\Bbb F}^d_q $ with $d\geq 2$. We define the Jacobi sum with    $(a)=(a_1, \cdots,a_d)\in  {\Bbb Z}^d $,  by
$$J_{\omega_d}^{(a)} :=
 \underset { z , \, \omega_d(z)=0   }\sum \chi_q^{a_1}(z_1)\cdots \chi_q^{a_d}(z_d)\in {\Bbb Z}[\epsilon_{q-1}].$$
When $z_i=0$, we set $\chi^{a_i}_q (z_i)=0$. The definition of the Teichmuller character, $\chi_q$,  is given in section 2.

Let $H^d_r\subset {\Bbb F}^d_q$  be an affine subvariety  of  dimension $r$. We define
$$J_{H^d_r }^{(a)}:=
 \underset { z\in H^d_r     }\sum \chi_q^{a_1}(z_1)\cdots \chi_q^{a_d}(z_d).$$

 We also define $J^{(a)}=(-1)^{d-1}\sum_{z_1+\cdots+z_d+1=0}\chi_{q}^{a_1 }(z_1)\cdots\chi_{q}^{a_d}(z_d)$.

Note, that   $J_{H^d_r }^{(a)}$ is defined for $a_i$ mod $(q-1)$, thus, we can consider $\vert a_i \vert\leq  q-1$.

 We can assume, by reordering the entries if   necessary, that the $r$-dimensional affine variety $H_r^d\subset {\Bbb F}_q^d$ is given by the equations
 $$\{\omega_{r+1}(z)=0,\cdots, \omega_{d}(z)=0\}$$ with $\omega_k(z)= -\sum^r_{j=1}\alpha_{kj}z_j+z_k-\alpha_k, $
  and $r+1\leq k\leq d. $

\begin{lem}\label{car} Let us consider $(z_1,\cdots,z_r)\in {\Bbb F}^r _q$, with $z_i\neq 0$. We have the equality
$$ \chi_q^{a_k}( \alpha_{k1}\cdot  z_1+\cdots + \alpha_{kr } \cdot z_{r }+ \alpha_k )=$$
$$  (q-1)^{-r}  \sum_{0\leq i_1,\cdots,i_{r} \leq q-2}  \chi_{q}^{ i_1}(z_1)\cdots \chi_{q}^{ i_{r}}(z_{r})\cdot J_{\omega_k}^{(-i_1,\cdots,-i_{r},a_k)}$$

\end{lem}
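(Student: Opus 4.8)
The plan is to recognize the claimed identity as the Fourier inversion formula for a function on the finite abelian group $A:=({\Bbb F}_q^\times)^r$, whose dual is generated coordinatewise by the Teichmuller character $\chi_q$. Concretely, I would introduce the function $g\colon A\to {\Bbb Z}[\epsilon_{q-1}]$ given by
$$g(w_1,\dots,w_r):=\chi_q^{a_k}(\alpha_{k1}w_1+\cdots+\alpha_{kr}w_r+\alpha_k),$$
using the convention $\chi_q^{a_k}(0)=0$ that is fixed in the definition of the Jacobi sums. The left-hand side of the lemma is exactly $g$ evaluated at the given point with all $z_i\neq 0$, i.e.\ at a point of $A$. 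This already explains the hypothesis $z_i\neq 0$: if some coordinate of $z$ vanished, every summand on the right-hand side would carry a factor $\chi_q^{i_j}(z_j)=\chi_q^{i_j}(0)=0$, so the right-hand side would be $0$, whereas $g(z)$ need not be.

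Next I would record that the products $\psi_{(i_1,\dots,i_r)}(w):=\chi_q^{i_1}(w_1)\cdots\chi_q^{i_r}(w_r)$, for $0\le i_1,\dots,i_r\le q-2$, run over the full character group $\widehat A$ and number $(q-1)^r=\#A$. Orthogonality of these characters yields the inversion formula
$$g(z)=(q-1)^{-r}\sum_{0\le i_1,\dots,i_r\le q-2}\Big(\sum_{w\in A}g(w)\,\psi_{(i_1,\dots,i_r)}(w)^{-1}\Big)\psi_{(i_1,\dots,i_r)}(z),$$
where $\psi_{(i_1,\dots,i_r)}(w)^{-1}=\chi_q^{-i_1}(w_1)\cdots\chi_q^{-i_r}(w_r)$ since $\chi_q$ takes values in roots of unity. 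It then remains only to identify the inner coefficient with a Jacobi sum.

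For that I would use the defining equation $\omega_k(z)=-\sum_{j=1}^r\alpha_{kj}z_j+z_k-\alpha_k$: the constraint $\omega_k=0$ parametrizes the slice by $(w_1,\dots,w_r)$ with $z_k=\alpha_{k1}w_1+\cdots+\alpha_{kr}w_r+\alpha_k$, so that $\chi_q^{a_k}(z_k)=g(w)$ there. Hence
$$J_{\omega_k}^{(-i_1,\dots,-i_r,a_k)}=\sum_{\omega_k=0}\chi_q^{-i_1}(w_1)\cdots\chi_q^{-i_r}(w_r)\,\chi_q^{a_k}(z_k)=\sum_{w\in A}g(w)\,\psi_{(i_1,\dots,i_r)}(w)^{-1},$$
which is precisely the inner coefficient above. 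Substituting this back into the inversion formula reproduces the statement verbatim.

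The main obstacle is not any deep computation but the bookkeeping at zero. One must check that the Jacobi sum over the affine slice $\{\omega_k=0\}$ coincides term-by-term with the character-weighted sum over $A=({\Bbb F}_q^\times)^r$: the factors $\chi_q^{-i_j}(w_j)$ annihilate every term with some $w_j=0$ (even when $i_j=0$, by the uniform convention $\chi_q^{0}(0)=0$), while the factor $\chi_q^{a_k}(z_k)$ annihilates the terms with $z_k=0$, so the effective index set is exactly $A$ with weight $g(w)\psi_{(i)}(w)^{-1}$. Once this matching is verified, the lemma follows immediately from orthogonality, the division by $(q-1)^r$ being harmless as the identity holds in ${\Bbb Q}(\epsilon_{q-1})$.
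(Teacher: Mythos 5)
Your proof is correct and takes essentially the same route as the paper's: the paper sets up the $(q-1)^r\times(q-1)^r$ linear system expressing $\chi_q^{a_k}(\alpha_{k1}z_1+\cdots+\alpha_{kr}z_r+\alpha_k)$ in the basis of characters $\chi_q^{i_1}(z_1)\cdots\chi_q^{i_r}(z_r)$ of $({\Bbb F}_q^\times)^r$ and identifies the coefficients with $(q-1)^{-r}J_{\omega_k}^{(-i_1,\cdots,-i_r,a_k)}$ via the same parametrization of the slice $\{\omega_k=0\}$. Your Fourier-inversion phrasing just makes the orthogonality step and the bookkeeping at zero explicit, which the paper leaves implicit.
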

\begin{proof} It suffices to consider the system of $(q-1)^{r}$-linear equations, with variables $X_{ i_1\cdots i_{r} }$
$$\chi_{q}^{a_k}( \alpha_{k1} \cdot  z_1+\cdots +\alpha_{kr} \cdot z_{r}+ \alpha_k)=$$
   $$\sum_{0\leq i_1,\cdots,i_{r} \leq q-2} X_{ i_1\cdots i_{r} }\cdot \chi_{q}^{ i_1}(z_1)\cdots \chi_{q}^{ i_{r}}(z_{r}), $$
where $(z_1,\cdots,z_{r}) \in ({{\Bbb F}_q^\times})^{r} $,
and the equality
$$ J_{\omega_k}^{(-i_1,\cdots,-i_{r},a_k)}=$$
$$\sum_{ (z_1,\cdots,z_{r}) \in {\Bbb F}_q^{r}     }\chi_{q}^{-i_1}(z_1)\cdots \chi_{q}^{-i_{r}}(z_{r})\chi_{q}^{a_k}(\alpha_{k1} \cdot  z_1+\cdots + \alpha_{kr} \cdot z_{r}+\alpha_k ).$$

\end{proof}

\begin{lem}\label{prod} With the above notations, we have that
$$(q-1)^{r(d-r-1)} J_{H^d_r}^{(a)}=$$ $$ \sum [J^{(i_1^{r+2}+\cdots+i^{d }_1+a_1,\cdots, i_r^{r+2}+\cdots+i^{d}_r+a_r,a_{r+1})}_{\omega_{r+1}}\cdot \prod _{k={r+2}} ^dJ^{(-i_1^{k},\cdots,-i_r^{k}, a_{k})}_{\omega_{k}}] $$
where the sum is over all $0\leq i_s^k \leq q-2$, with $1\leq s \leq r$.
\end{lem}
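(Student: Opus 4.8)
The plan is to parametrize $H^d_r$ by its first $r$ coordinates and then expand each of the remaining $d-r$ multiplicative factors by means of Lemma \ref{car}. Since the defining equations read $z_k=\sum_{j=1}^{r}\alpha_{kj}z_j+\alpha_k$ for $r+1\le k\le d$, a point of $H^d_r$ is determined by $(z_1,\dots,z_r)\in{\Bbb F}_q^r$, so that
$$J^{(a)}_{H^d_r}=\sum_{(z_1,\dots,z_r)\in{\Bbb F}_q^r}\chi_q^{a_1}(z_1)\cdots\chi_q^{a_r}(z_r)\prod_{k=r+1}^{d}\chi_q^{a_k}\Bigl(\sum_{j=1}^{r}\alpha_{kj}z_j+\alpha_k\Bigr).$$
By the convention $\chi_q^{a_i}(0)=0$, every term with some $z_i=0$ for $1\le i\le r$ vanishes, so the outer sum may be restricted to $(z_1,\dots,z_r)\in({\Bbb F}_q^\times)^r$, which is precisely the range in which Lemma \ref{car} applies.

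First I would substitute the expansion of Lemma \ref{car} into each of the $d-r$ factors indexed by $k=r+1,\dots,d$. Each substitution introduces an index vector $(i_1^k,\dots,i_r^k)$ ranging over $0\le i_s^k\le q-2$, a Jacobi factor $J^{(-i_1^k,\dots,-i_r^k,a_k)}_{\omega_k}$, the characters $\chi_q^{i_1^k}(z_1)\cdots\chi_q^{i_r^k}(z_r)$, and a scalar $(q-1)^{-r}$; altogether the prefactor becomes $(q-1)^{-r(d-r)}$. Interchanging the finite sums and collecting, for each fixed $s$ the total exponent of $\chi_q(z_s)$ equals $a_s+\sum_{k=r+1}^{d}i_s^k$, so the inner sum over $(z_1,\dots,z_r)\in({\Bbb F}_q^\times)^r$ factors as $\prod_{s=1}^{r}\sum_{z_s\in{\Bbb F}_q^\times}\chi_q^{\,a_s+\sum_{k=r+1}^{d}i_s^k}(z_s)$.

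The key step is then the orthogonality relation for multiplicative characters: $\sum_{z\in{\Bbb F}_q^\times}\chi_q^{m}(z)$ equals $q-1$ when $m\equiv0\pmod{q-1}$ and $0$ otherwise. Hence only those index configurations survive for which $a_s+\sum_{k=r+1}^{d}i_s^k\equiv0\pmod{q-1}$ for every $s$, and each surviving configuration contributes a factor $(q-1)^r$. These $r$ congruences let me eliminate $i_s^{r+1}$, namely $-i_s^{r+1}\equiv a_s+\sum_{k=r+2}^{d}i_s^k$, which converts the factor $J^{(-i_1^{r+1},\dots,-i_r^{r+1},a_{r+1})}_{\omega_{r+1}}$ into the aggregated term $J^{(i_1^{r+2}+\cdots+i_1^{d}+a_1,\dots,i_r^{r+2}+\cdots+i_r^{d}+a_r,\,a_{r+1})}_{\omega_{r+1}}$ displayed in the statement, leaving the $i_s^k$ with $r+2\le k\le d$ as the only free indices. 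Tallying the powers of $q-1$ gives $(q-1)^{-r(d-r)}\cdot(q-1)^{r}=(q-1)^{-r(d-r-1)}$, which is exactly the asserted normalization.

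I expect the main obstacle to be bookkeeping rather than conceptual: keeping straight the sign flip between the character exponent $+i_s^k$ and the Jacobi exponent $-i_s^k$, correctly eliminating $i_s^{r+1}$ through the orthogonality constraint so that $\omega_{r+1}$ alone receives the aggregated exponent while the other $\omega_k$ keep their own indices, and verifying that the terms with $z_i=0$ are genuinely harmless. The analytic content is merely character orthogonality; the care lies entirely in the index combinatorics and the accounting of the factors of $q-1$.
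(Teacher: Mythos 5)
Your proof is correct, and it rests on the same key input as the paper's, namely Lemma \ref{car}, but the reduction is organized differently. The paper proceeds variable by variable: it applies Lemma \ref{car} only to the factor indexed by $k=d$, substitutes $z_d$ by the linear form it equals on $H^d_r$, and obtains $(q-1)^{-r}$ times a sum of the $J^{(-i^d_1,\dots,-i^d_r,a_d)}_{\omega_d}$ against Jacobi-type sums over the projected variety in ${\Bbb F}_q^{d-1}$ with shifted exponents $a_s+i_s^d$; iterating through $z_{d-1},\dots,z_{r+2}$ it lands on the base case $\{\omega_{r+1}=0\}\subset{\Bbb F}_q^{r+1}$, where the residual sum is literally $J^{(\cdots,a_{r+1})}_{\omega_{r+1}}$ by the Remark following the Lemma, so no orthogonality is invoked at the end. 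You instead expand all $d-r$ factors simultaneously, including the one for $k=r+1$, and then use orthogonality of multiplicative characters on $({\Bbb F}_q^\times)^r$ to discard every index configuration except those with $a_s+\sum_{k=r+1}^{d}i_s^k\equiv 0\pmod{q-1}$; these congruences pin down $i_s^{r+1}$, and since Jacobi sums depend on their exponents only mod $q-1$ (as the paper notes), the factor $J^{(-i^{r+1}_1,\dots,-i^{r+1}_r,a_{r+1})}_{\omega_{r+1}}$ becomes exactly the aggregated term of the statement. Your accounting of the powers of $q-1$, namely $(q-1)^{-r(d-r)}$ from the expansions against $(q-1)^{r}$ from orthogonality, matches the asserted normalization. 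In short, your route makes the Fourier inversion on $({\Bbb F}_q^\times)^r$ explicit and avoids the iteration, at the cost of an extra mod-$(q-1)$ elimination step; the paper's route avoids orthogonality by stopping one step earlier and recognizing the last hyperplane sum directly. Both are sound.
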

\begin{proof} From the above Lemma we have that
$$ \chi_{q}^{a_d}( \alpha_{d1}\cdot  z_1+\cdots + \alpha_{dr } \cdot z_{r }+ \alpha_d )=$$
$$  (q-1)^{-r}  \sum_{0\leq i^d_1,\cdots,i^d_{r} \leq q-2}  \chi_{q}^{ i^d_1}(z_1)\cdots \chi_{q}^{ i^d_{r}}(z_{r})\cdot J_{\omega_k}^{(-i^d_1,\cdots,-i^d_{r},a_d)}.$$

With using this equality,   by replacing $z_d$ by $\alpha_{d1}\cdot  z_1+\cdots + \alpha_{dr } \cdot z_{r }+ \alpha_d$ in
$(q-1)^{r}J_{H^d_r}^{(a)}:=
 (q-1)^{r}\underset { z\in H^d_r     }\sum \chi_q^{a_1}(z_1)\cdots \chi_q^{a_d}(z_d)$, we obtain
$$\sum_{z_1,\cdots, z_{d-1} }\chi_{q}^{a_1+i_1^d}(z_1) \cdots \chi_{q}^{a_r+i_r^d}(z_r) \cdot \chi_{q}^{a_{r+2}}(z_{r+1}) \cdots \chi_{q}^{a_{d-1} }(z_{d-1})\cdot J^{(-i_1^{d},-\cdots,-i_r^{d}, a_{d})}_{\omega_{d}}.$$
 The sum is over the affine space within ${\Bbb F}_q^{d-1}$ defined by
$$\{(z_1,\cdots,z_{d-1})\vert \omega_{r+1}(z_1,\cdots, z_{d-1})=0,\cdots, \omega_{d-1}(z_1,\cdots, z_{d-1})=0\}.$$

We prove the Lemma by repeating this process with $z_{d-1}, \cdots , z_{r+2}$ and by considering, in the last step,    the Jacobi sums $$J^{(i_1^{r+2}+\cdots+i^{d }_1+a_1,\cdots, i_r^{r+2}+\cdots+i^{d}_r+a_r,a_{r+1})}_{\omega_{r+1}}.$$

\end{proof}

\begin{rem} Note that for $r=d-1$, we have the formula $J_{H^d_{d-1}}^{(a)}=  J^{(a_1,\cdots ,a_{d})}_{\omega_{d}}$.
\end{rem}

 We denote by $Q:O_{E}={\Bbb F}_q\times O_{E'}\to   O_{E'}$ ($k(\infty)\simeq {\Bbb F}_q$)  the projection morphism. Note that $1\times u \in H_\mathfrak m^{D+i\cdot D_1}\subset {\Bbb F}_q\times O_{E'}$ if and only if $u\in \overline H_\mathfrak m^{D+i\cdot D_1}:=Q[(1\times O_{E'})\cap  H_\mathfrak m^{D+i\cdot D_1}]\subset  O_{E'}\simeq {\Bbb F}_q^d$. In the next Theorem, we follow the notation of Proposition \ref{series}. We denote $r:= h^0(O_X(D+i\cdot D_1-E))$,
where $(a):=(a_1,\cdots,a_d)$,  $a_i:=n_i\frac{q-1}{n}$, $1\leq i\leq d$, and the $n_i$'s are introduced in section 2.
\begin{thm}\label{Jacob} We have
$$L(X,\chi_f,t)=\sum_{[D]}   \chi_f( {\rho( l^D_{\mathfrak m})})   [ \underset{i=0}{\overset
{ 2g+d-1}  \sum}  q^{h^0  ({\mathcal O}_{X  }(D+i\cdot D_1-E))} J^{(a)}_{ \overline H_\mathfrak m^{D+i\cdot D_1}} \cdot t^{ i}].$$
The   sum is over all the classes (with the algebraic equivalence) of divisors of degree $0$, $D$,     on $X\setminus T'$.
Moreover, $(q-1)^{r(d-r-1 )}J^{(a)}_{ \overline H_\mathfrak m^{D+i\cdot D_1}}$   can be expressed as a sum of products of $(d-r)$ Jacobi sums.
\end{thm}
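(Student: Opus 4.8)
The plan is to establish the displayed formula for $L(X,\chi_f,t)$ first, and then to read off the ``Moreover'' statement as a direct application of Lemma \ref{prod}. For the main formula I would start from Proposition \ref{series}, whose only ingredient still needing to be rewritten is the inner sum $\sum_u \chi_f(\eta(1\times u))$, taken over the $u\in O^\times_{E'}$ with $1\times u^{-1}\in H_\mathfrak m^{D+i\cdot D_1}$. Since the $x_i$ ($1\le i\le d$) are rational, $O_{E'}\simeq{\Bbb F}_q^d$ and $O^\times_{E'}\simeq({\Bbb F}_q^\times)^d$, so I may write $u=(u_1,\dots,u_d)$. Using the explicit description $\chi_f(\eta(z_0,\dots,z_d))=\chi_q^{-a_0}(z_0)\cdots\chi_q^{-a_d}(z_d)$ from Section 2 with $z_0=1$, the summand becomes $\prod_{i=1}^d\chi_q^{-a_i}(u_i)$. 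I would then substitute $v=u^{-1}$: by the very definition $\overline H_\mathfrak m^{D+i\cdot D_1}=Q[(1\times O_{E'})\cap H_\mathfrak m^{D+i\cdot D_1}]$, the condition $1\times u^{-1}\in H_\mathfrak m^{D+i\cdot D_1}$ is equivalent to $v\in\overline H_\mathfrak m^{D+i\cdot D_1}$, and the reindexing carries the index set bijectively onto $\overline H_\mathfrak m^{D+i\cdot D_1}\cap({\Bbb F}_q^\times)^d$, while $\chi_q^{-a_i}(u_i)=\chi_q^{a_i}(v_i)$ because $\chi_q$ is multiplicative. Hence $\sum_u\chi_f(\eta(1\times u))=\sum_v\prod_i\chi_q^{a_i}(v_i)$, and the convention $\chi_q^{a_i}(0)=0$ lets me extend the sum over all of $\overline H_\mathfrak m^{D+i\cdot D_1}$ without changing its value, so it equals $J^{(a)}_{\overline H_\mathfrak m^{D+i\cdot D_1}}$. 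Swapping the two (finite) outer sums and pulling out the $i$-independent factor $\chi_f(\rho(l^D_\mathfrak m))$ then gives the asserted expression for $L(X,\chi_f,t)$.

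For the ``Moreover'' part the key observation is that $H_\mathfrak m^{D+i\cdot D_1}$ is the image under $\pi^{J}_\mathfrak m$ of the quotient $H^0(\mathcal O_X(J))/H^0(\mathcal O_X(J-E))$ with $J=D+i\cdot D_1$, hence a linear subspace of $O_E$; consequently $\overline H_\mathfrak m^{D+i\cdot D_1}$ is a coset of a linear subspace of $O_{E'}\simeq{\Bbb F}_q^d$, i.e.\ an affine subvariety. Let $r$ be its dimension. After reordering the $d$ coordinates (as permitted in the paragraph preceding Lemma \ref{car}) I may choose $r$ of them, say $z_1,\dots,z_r$, onto which $\overline H_\mathfrak m^{D+i\cdot D_1}$ projects isomorphically, and solve for the remaining $d-r$ coordinates as affine-linear functions of $z_1,\dots,z_r$. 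This is exactly the normalized presentation $\{\omega_{r+1}(z)=0,\dots,\omega_d(z)=0\}$ with $\omega_k(z)=-\sum_{j=1}^r\alpha_{kj}z_j+z_k-\alpha_k$ on which Lemma \ref{prod} is built. Applying Lemma \ref{prod} verbatim then produces $(q-1)^{r(d-r-1)}J^{(a)}_{\overline H_\mathfrak m^{D+i\cdot D_1}}$ as a sum of products of one factor $J_{\omega_{r+1}}$ and $d-r-1$ factors $J_{\omega_k}$, that is, of exactly $d-r$ Jacobi sums, which is the stated conclusion.

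The step that requires the most care is the bookkeeping for $r$: I must identify the dimension of the affine variety $\overline H_\mathfrak m^{D+i\cdot D_1}$ -- the quantity that governs both the exponent $(q-1)^{r(d-r-1)}$ and the number $d-r$ of factors in Lemma \ref{prod} -- and check that it is the $r$ recorded in the statement. Concretely this means computing $\dim\overline H_\mathfrak m^{D+i\cdot D_1}=\dim H_\mathfrak m^{J}-1=h^0(\mathcal O_X(J))-h^0(\mathcal O_X(J-E))-1$, the final $-1$ coming from fixing the $x_0$-component equal to $1$ before applying $Q$, as illustrated by Example 2 where $\overline H$ is the set of monic degree-$r$ reductions. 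One must also verify that the $r$ chosen free coordinates can be taken among the nonvanishing ones, so that the Teichm\"uller identity of Lemma \ref{car} applies termwise on the summation range; the remaining terms with a vanishing coordinate are harmless, being absorbed by the convention $\chi_q^{a_i}(0)=0$. Once this presentation and the dimension count are in place, the first assertion follows from the substitution $v=u^{-1}$ and the second is Lemma \ref{prod}.
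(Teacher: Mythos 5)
Your proof follows the same route as the paper's: Proposition \ref{series}, together with the character identity $\chi_f(\eta(1\times u^{-1}))=\chi_q^{a_1}(u_1)\cdots\chi_q^{a_d}(u_d)$ and the substitution $v=u^{-1}$, turns the inner sum into $J^{(a)}_{\overline H_\mathfrak m^{D+i\cdot D_1}}$, and the ``Moreover'' clause is Lemma \ref{prod} applied to the affine subvariety $\overline H_\mathfrak m^{D+i\cdot D_1}$. Your added care in identifying $r$ with $\dim \overline H_\mathfrak m^{D+i\cdot D_1}=h^0(\mathcal O_X(D+i\cdot D_1))-h^0(\mathcal O_X(D+i\cdot D_1-E))-1$ is warranted and consistent with Corollary \ref{P}, where the paper's own terse proof leaves this bookkeeping implicit.
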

\begin{proof} For the first assertion it suffices to bear in mind   Proposition \ref{series}.   By considering
 the isomorphism $O_{E'}^\times \simeq {\Bbb F}^\times _q\overset d{\times \cdots \times }{\Bbb F}^\times _q$  and that
  $\chi_f(\eta(1,u^{-1}_1\cdots,u^{-1}_d))=\chi^{ a_1  }_{q}(u_1 )\cdots \chi^{a_d }_{q}(u_d )  $ we obtain,
  $$\sum_{u \in  \overline H_\mathfrak m^{D+i\cdot D_1}}\chi_f(\eta(1\times u^{-1} ))=J^{(a)}_{ \overline H_\mathfrak m^{D+i\cdot D_1}}.$$

The last assertions follows  from Lemma \ref{prod}.
\end{proof}
Let ${\Bbb F}$ be an algebraic closure of ${\Bbb F}_q$. We have that
$$({\rm{Pic}}^0_Y({\Bbb F}))^\vee:={\rm{Hom}}( {\rm{Pic}}^0_Y({\Bbb F}), {\Bbb Q}/{\Bbb Z})\simeq \widehat{\Bbb Z}_{(p)}^{2\pi} \times
\widehat {\Bbb Z}_p^\delta,$$
 $\pi$  being the genus of the curve $Y$ and $\delta \in {\Bbb N}$ with $\delta \leq \pi$,
$\widehat{\Bbb Z}_{(p)}$ and $\widehat {\Bbb Z}_p$ denote  the completion of  ${\Bbb Z}$ along the primes of $ {\Bbb Z}\setminus \{p\}$   and  $p$, respectively.
We have the decomposition into eigenspaces
 $$({\rm{Pic}}^0_Y({\Bbb F}))^\vee=\oplus_{0\leq j\leq n-1}({\rm{Pic}}^0_Y({\Bbb F}))^{\chi_f^j}.$$
 Since $n\vert q-1$ we have that the $q$-Frobenius morphism, $ F_q$, is a  $\widehat {\Bbb Z}[\epsilon_n]$-endomorphism in each eigenspace $({\rm{Pic}}^0_Y({\Bbb F}))^{\chi_f^j}$. We consider the endomorphism   $L_q:=F_q-Id$. Its characteristic polynomial satisfies
 $det(t-L_q )=L(X,\chi^j_f,(t+1)^{-1})(t+1)^e$ with $e:=2g+d-1$.

 We define the $k$-fitting ideal for the  endomorphism
 $$L_q:({\rm{Pic}}^0_Y({\Bbb F}))^{\chi_f }\to ({\rm{Pic}}^0_Y({\Bbb F}))^{\chi_f },$$ $I_{k}(F_q),$
 as the annihilator ideal of the ${\Bbb Z}$-module ${\rm Coker}^{\chi_f}  ( \bigwedge^{e-k} F_q)$. Because the coefficient of $t^k$ of
 $det(t-L_q )$ is the trace of $  \bigwedge^{e-k} F_q $,  we have that this coefficient annihilates the ${\Bbb Z}$-module ${\rm Coker}^{\chi_f}  ( \bigwedge^{e-k} F_q)$, and hence from Theorem \ref{Jacob} we obtain:

 \begin{cor} Bearing in mind the above notations, we have that
 $$\sum_{[D]}   \chi_f( {\rho( l^D_{\mathfrak m})})   [ \underset{i=k}{\overset
{ e}  \sum}  q^{h^0  ({\mathcal O}_{X  }(D+i\cdot D_1-E))} J^{(a)}_{ \overline H_\mathfrak m^{D+i\cdot D_1}} \cdot (e-i)^{ [k]}]\in  I_k(F_q),$$

where $(e-i)^{ [k]}:=(e-i)( e-i -1)\cdots ( e-i -k+1)$, $(e-i)^{ [0]}:= 1$ and the   sum is over all the classes (with the algebraic equivalence) of divisors of degree $0$, $D$,     on $X\setminus T'$. Moreover,  $(q-1)^{r(d-r-1 )}J^{(a)}_{ \overline H_\mathfrak m^{D+i\cdot D_1}}$   can be expressed as a sum of products of  Jacobi sums.
\end{cor}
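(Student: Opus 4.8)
The plan is to identify the bracketed quantity with the coefficient of $t^{k}$ in the characteristic polynomial $\det(t-L_q)$, which the discussion preceding the statement already places inside $I_k(F_q)$. Indeed, that coefficient is, up to sign, the trace of $\bigwedge^{e-k}F_q$, i.e. a sum of principal $(e-k)\times(e-k)$ minors of $F_q$; such a trace annihilates ${\rm Coker}^{\chi_f}(\bigwedge^{e-k}F_q)$ and hence lies in $I_k(F_q)$. So the whole task is to make this coefficient explicit in terms of Jacobi sums, after which membership in $I_k(F_q)$ is automatic.

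For the explicit computation I would begin from Theorem~\ref{Jacob}, writing $L(X,\chi_f,t)=\sum_{i=0}^{e}c_i\,t^{i}$ with $c_i=\sum_{[D]}\chi_f(\rho(l^D_\mathfrak m))\,q^{h^0(\mathcal O_X(D+i\cdot D_1-E))}\,J^{(a)}_{\overline H_\mathfrak m^{D+i\cdot D_1}}$ and $e=2g+d-1$. Substituting $t\mapsto(t+1)^{-1}$ into the given identity $\det(t-L_q)=L(X,\chi_f,(t+1)^{-1})(t+1)^{e}$ turns it into the polynomial identity $\det(t-L_q)=\sum_{i=0}^{e}c_i\,(t+1)^{e-i}$. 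I would then read off the $t^{k}$-coefficient by applying $\tfrac{1}{k!}\tfrac{d^{k}}{dt^{k}}\big|_{t=0}$: since $\tfrac{d^{k}}{dt^{k}}(t+1)^{e-i}\big|_{t=0}=(e-i)^{[k]}$, this yields $k!$ times the coefficient of $t^{k}$, namely $\sum_{i}c_i\,(e-i)^{[k]}$, where the falling factorial $(e-i)^{[k]}$ vanishes as soon as $e-i<k$. Because $I_k(F_q)$ is an ideal of $\Bbb Z$, scaling the (integral) coefficient by $k!$ keeps the result in $I_k(F_q)$.

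It then remains to interchange the two summations, moving the sum over degree-zero classes $[D]$ to the outside and leaving $\sum_i (e-i)^{[k]}\,q^{h^0(\mathcal O_X(D+i\cdot D_1-E))}\,J^{(a)}_{\overline H_\mathfrak m^{D+i\cdot D_1}}$ inside; this reproduces the displayed expression. The final sentence, that $(q-1)^{r(d-r-1)}J^{(a)}_{\overline H_\mathfrak m^{D+i\cdot D_1}}$ is a sum of products of Jacobi sums, is then inherited verbatim from Lemma~\ref{prod} applied to each variety $\overline H_\mathfrak m^{D+i\cdot D_1}$.

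The step I expect to be the crux is the coefficient bookkeeping, specifically matching the summation range $i=k,\dots,e$ and the falling factorials $(e-i)^{[k]}$ exactly as written. The derivative computation above produces a sum that starts at $i=0$, so I would have to verify that the terms with $i<k$ do not affect the membership claim, either because they cancel or because they are themselves absorbed into $I_k(F_q)$ as combinations of lower minors of $F_q$; equivalently, one checks that replacing the lower limit $0$ by $k$ alters the element only by something already in $I_k(F_q)$. Pinning down this boundary behaviour, together with fixing the sign in the identification with the trace of $\bigwedge^{e-k}F_q$, is the only genuinely delicate point; everything else is a direct substitution from Theorem~\ref{Jacob} and Lemma~\ref{prod}.
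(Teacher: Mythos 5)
Your proposal is correct and follows essentially the same route as the paper, which offers no proof of the corollary beyond the paragraph preceding it: the $t^{k}$-coefficient of $\det(t-L_q)=\sum_i c_i(t+1)^{e-i}$ is identified with a trace of an exterior power, hence annihilates ${\rm Coker}^{\chi_f}(\bigwedge^{e-k}F_q)$, and the coefficients $c_i$ are read off from Theorem~\ref{Jacob}. The boundary issue you single out as the crux is real and is not resolved by the paper either: extracting the coefficient gives $\frac{1}{k!}\sum_{i=0}^{e}c_i\,(e-i)^{[k]}$, where the falling factorial already kills the terms with $i>e-k$, so the displayed lower limit $i=k$ silently discards the terms $i=0,\dots,k-1$; as far as the paper's own argument goes this appears to be a typo for $i=0$, and your diagnosis that one must either restore those terms or show separately that they lie in $I_k(F_q)$ is exactly right.
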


For $X={\Bbb P}^1_q$, the projective line over ${\Bbb F}_q$, and the ideal $\mathfrak m=(x(x-1)(x-\alpha_3)\cdots (x-\alpha_d))\cdot \mathfrak m_\infty$, we denote $\overline H_\mathfrak m^{r\cdot \infty }$ by $\overline H^{r\cdot \infty }$.

\begin{cor}\label{P} When $X={\Bbb P}^1_q$, by following the notation of Example 2,  we have that
$$L({\Bbb P}^1_q,\chi_f,t)= \sum_{r=0}^{d-1}J^{(a)}_{\overline H^{r\cdot \infty }}\cdot t^{ r},$$
where $\overline H^{r\cdot \infty }=\{  v(x)\}\subset  \frac{{\Bbb F}_q[x]}{p(x)}$ and $\{v(x)\}$  is the set of monic polynomials of degree $r $. Moreover, $(q-1)^{r(d-r-1)}\cdot J^{(a)}_{\overline H^{r\cdot \infty }}$    is a sum of  products of  $(d-r)$ Jacobi sums.
\end{cor}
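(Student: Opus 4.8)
The plan is to read off Corollary \ref{P} from Theorem \ref{Jacob} by specializing to $X = {\Bbb P}^1_q$ and letting the rigidity of the projective line collapse the structural sums. Two features of ${\Bbb P}^1$ do the work. First, $g = 0$, so the summation range $0 \le i \le 2g + d - 1$ of Theorem \ref{Jacob} becomes $0 \le i \le d - 1$, matching the exponents in the asserted formula. Second, ${\rm{Pic}}^0_{{\Bbb P}^1}({\Bbb F}_q)$ is trivial, so the outer sum over algebraic-equivalence classes $[D]$ of degree-$0$ divisors on $X \setminus T'$ has a single term, the class of $D = 0$. Choosing the degree-one divisor $D_1 = \infty = x_0$ (whose support avoids $T'$) and writing $r$ for the index $i$, the divisors $i \cdot D_1$ become $r \cdot \infty$, so the level structures occurring are exactly the $l^{r\cdot\infty}_{\mathfrak m}$ of Example 2.

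With $D = 0$ in hand I would next eliminate the two scalar factors standing in front of $J^{(a)}_{\overline H^{r\cdot\infty}}$. For the character factor, I would check that $l^0_{\mathfrak m} = ({\mathcal O}_X, \pi_{\mathfrak m})$ is the neutral element of ${\rm{Pic}}^0_{X,{\mathfrak m}}({\Bbb F}_q)$ for the tensor-product law, so that $\rho(l^0_{\mathfrak m})$ is the identity of $G$ and $\chi_f(\rho(l^0_{\mathfrak m})) = 1$; this is the point at which the section of $\theta$ fixed in Section 2 must be unwound, since for $r = 0$ the construction of Example 1 reduces $\pi^{0\cdot\infty}_\infty$ to the natural epimorphism. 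For the power of $q$ I would compute degrees: since $E = \infty + x_1 + \cdots + x_d$ has degree $d + 1$, the line bundle ${\mathcal O}_{{\Bbb P}^1}(r\cdot\infty - E)$ has degree $r - d - 1 \le -2$ for every $0 \le r \le d - 1$, whence $h^0({\mathcal O}_{{\Bbb P}^1}(r\cdot\infty - E)) = 0$ and the factor $q^{h^0}$ equals $1$. These two reductions turn the right-hand side of Theorem \ref{Jacob} into $\sum_{r=0}^{d-1} J^{(a)}_{\overline H^{r\cdot\infty}} \, t^r$, which is the first assertion.

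To describe the index set of the Jacobi sum I would quote Example 2 verbatim: there $1 \times u(x)$ lies in $H^{r\cdot\infty}_{\mathfrak m}$ exactly when $u(x)$ is monic of degree $r$, and projecting by $Q$ onto $O_{E'} = {\Bbb F}_q[x]/(p(x))$ identifies $\overline H^{r\cdot\infty}$ with the set $\{v(x)\}$ of monic polynomials of degree $r$, as stated. For the final ``moreover'' clause I would check that this set, viewed inside $O_{E'} \simeq {\Bbb F}_q^d$ via evaluation $v \mapsto (v(x_1), \dots, v(x_d))$ at the roots of $p$, is an $r$-dimensional affine subvariety: a monic polynomial $x^r + c_{r-1}x^{r-1} + \cdots + c_0$ is affine-linear in its $r$ free coefficients $c_0, \dots, c_{r-1}$, so its image is cut out by $d - r$ affine-linear equations and is therefore an $H^d_r$ in the sense of Section 4. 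Applying Lemma \ref{prod} with this value of $r$ then writes $(q-1)^{r(d-r-1)} J^{(a)}_{\overline H^{r\cdot\infty}}$ as a sum of products consisting of one $\omega_{r+1}$-Jacobi sum and $d - r - 1$ Jacobi sums attached to $\omega_{r+2}, \dots, \omega_d$, that is, $d - r$ factors in total.

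I expect the one genuinely delicate step to be the verification that $\chi_f(\rho(l^0_{\mathfrak m})) = 1$, because it requires tracing the trivial divisor through the chosen section of $\theta$ and the definition of $\chi_f$ rather than merely counting dimensions or degrees. By contrast, the vanishing of $h^0$, the identification of $\overline H^{r\cdot\infty}$ via Example 2, and the affine-linear dimension count that feeds Lemma \ref{prod} are all routine, so the bulk of the argument is organizational: assembling these specializations in the right order so that Theorem \ref{Jacob} delivers the stated closed form.
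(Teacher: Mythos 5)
Your proposal is correct and follows exactly the route of the paper, whose proof is the one-line citation ``By taking $D_1=\infty$, this follows from Proposition \ref{series}, Example 2 and Theorem \ref{Jacob}''; you have simply made explicit the specializations ($g=0$, triviality of ${\rm Pic}^0_{{\Bbb P}^1}({\Bbb F}_q)$, vanishing of $h^0({\mathcal O}_{{\Bbb P}^1}(r\cdot\infty-E))$, $\chi_f(\rho(l^0_{\mathfrak m}))=1$, and the identification of $\overline H^{r\cdot\infty}$ with monic polynomials of degree $r$) that the paper leaves implicit. All of these checks are accurate, including the dimension count that feeds Lemma \ref{prod} for the final clause.
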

\begin{proof}
By taking $D_1=\infty$, this follows from Proposition \ref{series},     Example  2  and     Theorem \ref{Jacob}.
\end{proof}

In the case of  $J^{(a)}_{\overline H^{1\cdot \infty }}$ we can make a more explicit calculation than that made in Theorem \ref{Jacob}. This calculation will be made in the next section.


\section{Explicit Calculations and Aplications}

Bearing in mind    Corollary \ref{P}, we consider the $L$-series,
$$L({\Bbb P}^1_q,\chi_f,t)= \sum_{r=0}^{d-1}J^{(a)}_{\overline H^{r\cdot \infty }}\cdot t^{ r},$$
where
 $f:=(x-\alpha_1)^{n_1}\cdots (x-\alpha_d)^{n_d}$ with $\alpha_1=0$ and $\alpha_2=1$.
We find a formula for  the coefficients $J^{(a)}_{\overline H^{r\cdot \infty }}$ by expressing them as linear combinations of products of Jacobi sums  $J^{(u_1,\cdots,u_r,b)}$  and terms $\chi_q(\alpha_i-\alpha_j)$. We also find applications of this result.

In the following Theorem we denote
$$C_{i_1^{r+2},i_2^{r+2},\cdots,i^d_r}:=(-1)^{r(d-r)+a_1+\cdots+a_d}  \prod_{1\leq l\leq r} (\alpha_{l}-\alpha_{r+1})^{i_l^{r+2}+\cdots+i_l^d}\prod_{l,j=1, (j\neq l)}^{r+1}(\alpha_j-\alpha_l)^{a_j} \cdot $$
$$\cdot \prod_{k=r+2}^d(-1)^{a_k}[ \prod_{j=1}^r(\alpha_{j}-\alpha_k)^{a_k-i^k_j}]$$

\begin{thm}\label{V} We have that
$$(q-1)^{ r(d-r-1)} J^{(a)}_{\overline H_{\alpha_1,\cdots,\alpha_d}^{r\cdot \infty }}=$$
$$\sum [\chi_q(C_{i_1^{r+2},i_2^{r+2},\cdots,i^d_r})\cdot J^{(i^{r+2}_1+\cdots+i_1^d+a_1,\cdots,i_r^{r+2}+\cdots+i_r^d+a_r,a_{r+1})}\cdot \prod_{k=r+2}^dJ^{(-i_1^k,\cdots,-i_r^k,a_k)}]$$
where the sum is over all  $0\leq i^{r+2}_1,i^{r+2}_2, \cdots, i^d_r\leq q-2$.
\end{thm}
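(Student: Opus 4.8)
The plan is to put the variety $\overline H^{r\cdot\infty}_{\alpha_1,\dots,\alpha_d}$ into the standard form required by Lemma \ref{prod} and then to normalize the resulting $\omega$-Jacobi sums. By Corollary \ref{P} and Example 2 a point of $\overline H^{r\cdot\infty}$ is the image of a monic polynomial $v(x)$ of degree $r$ under the evaluation $v\mapsto (v(\alpha_1),\dots,v(\alpha_d))$ coming from $\Bbb F_q[x]/p(x)\simeq\Bbb F_q^d$, and the first thing I would do is write down the affine equations that cut out this set. A monic polynomial of degree $r$ is determined by its values $z_1=v(\alpha_1),\dots,z_r=v(\alpha_r)$ together with the normalization of its leading coefficient; indeed, writing $v(x)=\prod_{i=1}^r(x-\alpha_i)+w(x)$ with $\deg w\le r-1$ and interpolating $w$ through $\alpha_1,\dots,\alpha_r$ gives, for every $k$ with $r+1\le k\le d$, the relation
$$z_k=\sum_{j=1}^r\beta_{kj}\,z_j+\beta_k,\qquad \beta_k=\prod_{i=1}^r(\alpha_k-\alpha_i),\quad \beta_{kj}=\prod_{\substack{i=1\\ i\ne j}}^{r}\frac{\alpha_k-\alpha_i}{\alpha_j-\alpha_i}.$$
This exhibits $\overline H^{r\cdot\infty}$ as the variety $\{\omega_{r+1}=0,\dots,\omega_d=0\}$ of Lemma \ref{prod}, with $\alpha_{kj}=\beta_{kj}$ and constant term $\alpha_k=\beta_k$.

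With this identification Lemma \ref{prod} at once gives $(q-1)^{r(d-r-1)}J^{(a)}_{\overline H^{r\cdot\infty}}$ as the stated sum of products $J^{(\cdots)}_{\omega_{r+1}}\prod_{k=r+2}^dJ^{(\cdots)}_{\omega_k}$, but written in terms of the linear forms $\omega_k$ rather than the normalized sums $J^{(\cdots)}$. The second step is therefore to pass from each $\omega$-sum to a standard one. For fixed $k$ I would rescale the summation variables of Lemma \ref{car} by $z_j\mapsto(\beta_k/\beta_{kj})\,w_j$; this sends the argument $\sum_j\beta_{kj}z_j+\beta_k$ to $\beta_k(\sum_jw_j+1)$, so that by multiplicativity of $\chi_q$ and the definition of $J^{(\cdot)}$ (with its normalization $(-1)^{r}$ and a factor $\chi_q(-1)=-1$ coming from the eliminated variable),
$$J^{(-i_1^k,\dots,-i_r^k,a_k)}_{\omega_k}=(-1)^{a_k+r}\,\chi_q\!\Big(\beta_k^{\,a_k}\textstyle\prod_{j=1}^r(\beta_k/\beta_{kj})^{-i_j^k}\Big)\,J^{(-i_1^k,\dots,-i_r^k,a_k)},$$
and similarly for $\omega_{r+1}$ with the exponents $b_j=i_j^{r+2}+\cdots+i_j^d+a_j$. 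Here I would substitute the explicit value $\beta_k/\beta_{kj}=(\alpha_k-\alpha_j)\prod_{i\ne j,\,i\le r}(\alpha_j-\alpha_i)$.

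The final step is to multiply the scalar prefactors over $k=r+1,\dots,d$ and to check that they collapse to $\chi_q(C_{i_1^{r+2},\dots,i^d_r})$. The decisive point is a cancellation: the Lagrange-denominator factors $\prod_{i\ne j}(\alpha_j-\alpha_i)$ occur with exponent $\sum_{k\ge r+2}i_j^k+a_j$ from the $\omega_{r+1}$ prefactor and with exponent $-\sum_{k\ge r+2}i_j^k$ from the $\omega_k$ prefactors, so that their $i$-dependence cancels and only the power $a_j$ survives; this residue is the $j,l\le r$ part of $\prod_{j\ne l}^{r+1}(\alpha_j-\alpha_l)^{a_j}$. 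The factor $\beta_{r+1}^{a_{r+1}}$ yields the $j=r+1$ part of the same product, the numerators $\alpha_{r+1}-\alpha_j$ (after reversal of orientation) yield its $l=r+1$ part together with $(\alpha_l-\alpha_{r+1})^{i_l^{r+2}+\cdots+i_l^d}$, and the numerators $\alpha_k-\alpha_j$ with $k\ge r+2$ yield $(\alpha_j-\alpha_k)^{a_k-i_j^k}$. I expect the genuine obstacle to be precisely this reconciliation: keeping track simultaneously of the $(-1)^{a_k+r}$ normalizations, of every factor $\chi_q(-1)=-1$ produced when an $\alpha_k-\alpha_j$ is reversed to the orientation $\alpha_j-\alpha_k$ used in $C$, and of the cancellation above, so as to recover the exact sign $(-1)^{r(d-r)+a_1+\cdots+a_d}\prod_{k=r+2}^d(-1)^{a_k}$ recorded in the definition of $C$. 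Everything else is formal, the geometric and combinatorial input being supplied by Lemma \ref{prod} and the interpolation of the first step.
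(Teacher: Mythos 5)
Your proposal is correct and follows essentially the same route as the paper: derive the $(d-r)$ linear equations cutting out $\overline H^{r\cdot\infty}$ from the first $r$ evaluation conditions (you do this by Lagrange interpolation, the paper by Cramer's rule with Vandermonde determinants --- the resulting equations agree), apply Lemma \ref{prod}, and then normalize each $\omega$-Jacobi sum to a standard $J^{(\cdot)}$ by an affine change of variables, collecting the scalar prefactors into $\chi_q(C_{i_1^{r+2},\dots,i_r^d})$. The final bookkeeping of signs and $\chi_q$-factors, which you flag as the delicate point, is likewise left as a ``direct calculation'' in the paper.
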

\begin{proof}By using the notation of Corollary \ref{P}, if we have that
$$A_1+A_2 x+ \cdots+A_{r}x^{r-1}+x^r\in  H^{r\cdot \infty }\subseteq
\frac{{\Bbb F}_q[x]}{p(x)}\simeq   \frac{{\Bbb F}_q[x]}{ (x-\alpha_1)}\times\cdots \times \frac{{\Bbb F}_q[x]}{ (x-\alpha_d)}\simeq {\Bbb F}^{d}_q, $$
with $\alpha_1=0,  \alpha_2=1$ then the vectors $(z_1,\cdots,z_d)\in H^{r\cdot \infty }\subseteq {\Bbb F}^{d}_q$ is given by the $d$-linear equations
$$ \begin{matrix}    A_1+A_2\alpha_1+\cdots+A_{r}\alpha_1^{r-1}+\alpha_1^r=z_1
 \\\cdot  \\\cdot \\\cdot \\A_1+A_2\alpha_d+\cdots+A_{r}\alpha_d^{r-1}+\alpha_d^r=z_d  \end{matrix} .$$
From the first $r$ equations we obtain for $1\leq j\leq r$

$$A_j=\frac{ \left| \begin{matrix}    1 & \alpha_1 &\cdots &\overset {j}{z_1-\alpha_1^r} & \cdots &\alpha_1^{r-1}
  \\\cdot & \cdot &\cdots &\cdot &\cdots &\cdot \\\cdot & \cdot &\cdots &\cdot &\cdots &\cdot \\1 & \alpha_r &\cdots &{z_r-\alpha_r^r} & \cdots &\alpha_r^{r-1} \end{matrix}\right| }{V(\alpha_1,\cdots,\alpha_r)}$$
  where $V(\alpha_1,\cdots,\alpha_r)$ denotes the Vandermonde determinant
  $$\left|\begin{matrix}    1 & \alpha_1 &    \cdots &\alpha_1^{r-1}
  \\ \cdot & \cdot   &\cdots &\cdot \\ \cdot & \cdot &\cdots  &\cdot \\1 & \alpha_r    & \cdots &\alpha_r^{r-1} \end{matrix}\right|.$$

We also denote by $ V_j^h$    the Vandermonde determinant $ V (\alpha_1,\cdots,\alpha_r)$  without the $h$-row and $j$-column.
   By developing the determinant of the denominator of $A_j$ by the $j$-column we obtain
  $$A_j=\frac{(-1)^{j+1}z_1V_j^1 +\cdots+(-1)^{j+r}z_{r}V_j^{r} +
  (-1)^{r-j}V(\alpha_1,\cdots,\alpha_r)}{V(\alpha_1,\cdots,\alpha_r)} .$$

  By replacing the values of the $A_j$  in the last $(d-r)$ equations of the above  $d$-linear system of equations, we obtain for $1\leq j \leq d-r$, the equations:
$$z_1V(\alpha_2,\cdots, \alpha_{r},\alpha_{r+j})+z_2V(\alpha_1,\alpha_{3},\cdots \alpha_{r}, \alpha_{r+j})+\cdots+
  z_rV(\alpha_1, \cdots, \alpha_{r-1}, \alpha_{r+j})+$$ $$+z_{r+j}V(\alpha_1, \cdots, \alpha_{r} )+V(\alpha_1, \cdots, \alpha_{r}, \alpha_{r+j})=0.$$
  Bearing in mind that $\frac{V(\alpha_1,\cdots,\overset \wedge {\alpha_{i}}, \cdots, \alpha_{r},\alpha_{r+j})}{V(\alpha_1,\cdots,    \alpha_{r }, \alpha_{r+j})}=-\prod_h  (\alpha_i-\alpha_h)$
  where this product is over all $h\in \{1,\cdots, r,r+j\}$ with $h\neq i$, from the above  equations we deduce the $(d-r)$-linear equations for $1\leq j\leq d-r$.
  $$z_1 \prod_{i_1} (\alpha_1-\alpha_{i_1})^{-1}+\cdots+z_r \prod_{i_r} (\alpha_r-\alpha_{i_r})^{-1}+
  z_{r+j} \prod_{i_{r+j} } (\alpha_{r+j}-\alpha_{i})^{-1}-1=0.$$
These products are defined over all  $i_h  \in \{1,\cdots, r,r+j\}$, with  $i_h\neq h$. We denote these equations by  $\omega_{r+j}(z_1,\cdots,z_d)=0$, for $1\leq j\leq d-r$.

By Lemma \ref{prod} we have
$$(q-1)^{r(d-r-1)} J_{H^{r\cdot \infty }}^{(a)}=$$ $$ \sum [J^{(i_1^{r+2}+\cdots+i^{d }_1+a_1,\cdots, i_r^{r+2}+\cdots+i^{d}_r+a_r,a_{r+1})}_{\omega_{r+1}}\cdot \prod _{k={r+2}} ^dJ^{(-i_1^{k},\cdots,-i_r^{k}, a_{k})}_{\omega_{k}}]. $$

Bearing in mind that $  H^{r\cdot \infty }$ is given by the above $(d-r)$-linear equations $\omega_{r+j}(z_1,\cdots,z_d)=0$, $1\leq j \leq d-r$,
and that if $\omega_{r+j}(z_1,\cdots,z_d)=b_1 z_1+\cdots+b_{r}z_{r }+b_{r+j}z_{r+j}-1$
then
$$J^{(j_1,\cdots,j_r,a)}_{\omega_{r+j}}=$$ $$=(-1)^{r+j_1+\cdots+j_r+a }\chi_q^{-j_1}(b_1)\cdot \cdot \cdot\chi_q^{-j_r}(b_r)\cdot\chi_q^{-a}(b_{r+j})J^{(j_1,\cdots,j_r,a)}.$$

By  direct calculation,  using the last assertions, one concludes the Theorem.
\end{proof}

For $r=1$, $\alpha_1=0$ and $\alpha_2=1$, we have that

$$ H^{1\cdot \infty }\subseteq
\frac{{\Bbb F}_q[x]}{p(x)}\simeq  \frac{{\Bbb F}_q[x]}{x }\times \frac{{\Bbb F}_q[x]}{ (x-1) }\times \frac{{\Bbb F}_q[x]}{ (x-\alpha_3)}\times\cdots \times \frac{{\Bbb F}_q[x]}{ (x-\alpha_d)}\simeq {\Bbb F}^{d}_q$$
is given by the equations
 $$(*)\{ (z_1,z_2,\cdots,z_d)\in {\Bbb F}^{d+1}_q \text{ with }  z_2-z_1=1, z_3-z_1=\alpha_3,\cdots,z_d-z_1=\alpha_d\}.$$

 \begin{lem}\label{a}With the above notations we have that
 $$(q-1)^{d-2}(-1)^{d-1+a_2+\cdots+a_d}J^{(a)}_{ H^{1\cdot \infty }}=$$
 $$=
 \sum_{0\leq i_1,\cdots,i_d \leq q-2}\chi_q^{a_3-i_3}(\alpha_3)\cdots \chi_q^{a_d-i_d}(\alpha_d)\cdot J^{(a_1+i_3+\cdots+i_d,a_2)}(\chi_q)\cdot \prod_{k=3}^dJ^{(-i_k,a_k)}.$$
 \end{lem}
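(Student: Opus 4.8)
The plan is to specialize Theorem~\ref{V} to the case $r=1$. When $r=1$, the affine variety $H^{1\cdot\infty}$ is cut out by the $(d-1)$ linear equations $\omega_{r+1}=\omega_2,\omega_3,\dots,\omega_d$ displayed in $(*)$, namely $z_2-z_1=1$ and $z_k-z_1=\alpha_k$ for $3\le k\le d$. First I would read off the coefficients $\alpha_{kj}$ and $\alpha_k$ of these specific linear forms: writing $\omega_k(z)=z_k-z_1-\alpha_k$ in the normalized shape $\omega_k(z)=-\alpha_{k1}z_1+z_k-\alpha_k$ used before Lemma~\ref{car}, we have $\alpha_{k1}=1$ for every $k$, while $\alpha_2=-1$ (since $z_2-z_1=1$ rewrites as $z_2-z_1-1=0$, so $\alpha_2=1$; I would fix the sign convention carefully here) and $\alpha_k=-\alpha_k$ of the problem, i.e. the constant in $\omega_k$ equals the fixed point parameter. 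Getting these substitutions exactly right is where bookkeeping errors would creep in, so I would tabulate them explicitly before plugging into the master formula.

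Next I would invoke Lemma~\ref{prod} with $r=1$. The left side becomes $(q-1)^{1\cdot(d-2)}J^{(a)}_{H^d_1}=(q-1)^{d-2}J^{(a)}_{H^{1\cdot\infty}}$, matching the power of $q-1$ in the target. On the right, the single summation index $s=1$ gives a family of integers $i_1^{r+2},\dots,i_1^d$, which I would rename $i_3,\dots,i_d$ to match the statement (here $i_1^k$ becomes $i_k$). The product over $k$ then runs from $r+2=3$ to $d$, producing factors $J^{(-i_k,a_k)}_{\omega_k}$, and the distinguished first factor is $J^{(i_3+\cdots+i_d+a_1,\,a_2)}_{\omega_{r+1}}=J^{(i_3+\cdots+i_d+a_1,\,a_2)}_{\omega_2}$. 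So after renaming, Lemma~\ref{prod} already gives the shape of the right-hand side, but with each Jacobi sum still attached to a specific linear form $\omega_k$ rather than the normalized $J^{(\cdots)}$.

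The remaining task is to convert each $J^{(\cdots)}_{\omega_k}$ into the standard $J^{(\cdots)}$ times an explicit monomial in $\chi_q$ of the $\alpha_i$'s. This is exactly the reduction formula recorded inside the proof of Theorem~\ref{V}: if $\omega_{r+j}(z)=b_1z_1+\cdots+b_r z_r+b_{r+j}z_{r+j}-1$ then $J^{(j_1,\dots,j_r,a)}_{\omega_{r+j}}=(-1)^{r+j_1+\cdots+j_r+a}\chi_q^{-j_1}(b_1)\cdots\chi_q^{-j_r}(b_r)\chi_q^{-a}(b_{r+j})J^{(j_1,\dots,j_r,a)}$. I would apply this to each factor with $r=1$, reading off the coefficients $b_1,b_{k}$ from the normalized form of $\omega_k$ after dividing through to make the constant term $-1$; for the equation $z_k-z_1=\alpha_k$ this means dividing by $-\alpha_k$ (and by $-1$ for $\omega_2$), which is precisely what produces the factors $\chi_q^{a_k-i_k}(\alpha_k)$ and the global sign $(-1)^{d-1+a_2+\cdots+a_d}$ appearing in the statement. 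The main obstacle, and the only genuinely delicate part, is tracking the signs and the exponents of the $\chi_q(\alpha_k)$ terms through this normalization: one must verify that the accumulated powers of $(-1)$ from the $d-1$ applications collapse to the single factor $(-1)^{d-1+a_2+\cdots+a_d}$ on the left, and that the $\chi_q$-exponent $a_k-i_k$ emerges correctly from combining the $\chi_q^{a_k}$ coming from the normalization with the $\chi_q^{-i_k}$ coming from the first-argument twist. Once the sign and exponent bookkeeping is pinned down, collecting terms gives the asserted formula directly; everything else is the routine substitution already licensed by Lemma~\ref{prod} and the reduction identity.
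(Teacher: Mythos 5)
Your plan is correct and matches the paper's own (one-line) proof, which simply specializes Theorem~\ref{V} to $r=1$ via the equations $(*)$: unwinding Lemma~\ref{prod} at $r=1$ and applying the normalization identity $J^{(j_1,a)}_{\omega}=(-1)^{1+j_1+a}\chi_q^{-j_1}(b_1)\chi_q^{-a}(b_k)J^{(j_1,a)}$ to each factor yields a sign $(-1)^{1+a_k}$ and a twist $\chi_q^{a_k-i_k}(\alpha_k)$ per factor, which collapse to the stated $(-1)^{d-1+a_2+\cdots+a_d}$. One small correction to your bookkeeping: to bring $-z_1+z_k-\alpha_k=0$ to constant term $-1$ you divide by $+\alpha_k$ (giving $b_1=-\alpha_k^{-1}$, $b_k=\alpha_k^{-1}$), and $\omega_2=-z_1+z_2-1$ needs no rescaling at all.
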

 \begin{proof} This results from Theorem \ref{V} by considering $r=1$, $\alpha_1=0$ and  $\alpha_1=1$. It may be also deduced easily from the equations (*) of $ H^{1\cdot \infty }$.
\end{proof}

\begin{rem}\label{R} We consider the curve $y^n= x^{a_1}(x-1)^{a_2} (x-\alpha)^{a_3}$. We have that for $f=x^{a_1}(x-1)^{a_2} (x-\alpha)^{a_3}$
 $$L({\Bbb P}^1_q,\chi_f,t)=1+c_{1}(\alpha_3,q)t +c_{2}(\alpha_3,q)\cdot t^{2} .$$
 By applying Lemma \ref{a}, we have that

 $$c_1(\alpha, q)=(q-1)^{-1}(-1)^{a_2+a_3}\sum_{i=0}^{q-2}\chi_q(\alpha)^{a_3-i}J^{(a_1+i,a_2)}J^{(-i,a_3)}$$

and, for example, by using \cite{Al} Theorem 1, we have that the constant term, $c_{2}(\alpha_3,q) $, is equal to the Jacobi sum $-J^{(a_1,a_2,a_3)}$ and therefore that
$$L({\Bbb P}^1_q,\chi_f,t)=$$ $$1+[(q-1)^{-1}(-1)^{a_2+a_3}\sum_{i=0}^{q-2}\chi_q(\alpha)^{a_3-i}J^{(a_1+i,a_2)}J^{(-i,a_3)}]\cdot t -J^{(a_1,a_2,a_3)}\cdot t^{2}$$

\end{rem}

\begin{lem} With the above notations if $a_3=a_2$, $2(a_1+a_2)\equiv 0$ mod $(q-1)$ and $a_1+a_2 \neq q-1$ then
when  $\alpha^{\frac{q-1}{2}}=-1$ mod $p$ we have that $ c_1(q,\alpha)=0$.
\end{lem}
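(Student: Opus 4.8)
The plan is to begin from the closed expression for $c_1(\alpha,q)$ recorded in Remark \ref{R}, set $a_3=a_2$ (so the prefactor $(-1)^{a_2+a_3}=1$), and collapse the double Jacobi-sum expression into a single character sum on which a symmetry becomes visible. Writing each two-variable sum in its defining form $J^{(b_1,b_2)}=-\sum_{z_1+z_2=-1}\chi_q^{b_1}(z_1)\chi_q^{b_2}(z_2)$, the product $J^{(a_1+i,a_2)}J^{(-i,a_2)}$ becomes a double sum over variables $u,v\in{\Bbb F}_q$, and I would interchange the order of summation so as to perform the sum over $i$ first.

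The inner sum over $i$ is geometric: $\sum_{i=0}^{q-2}\big(\chi_q(\alpha)^{-1}\chi_q(u)\chi_q(v)^{-1}\big)^i$ equals $q-1$ when $\chi_q(u)=\chi_q(\alpha)\chi_q(v)$ and vanishes otherwise. Since $\chi_q$ is injective on ${\Bbb F}_q^\times$, this forces $u=\alpha v$, the factor $(q-1)^{-1}(q-1)=1$ cancels, and one is left with
$$c_1(\alpha,q)=\chi_q(\alpha)^{a_1+a_2}\,S,\qquad S:=\sum_{v\in{\Bbb F}_q^\times}\chi_q^{a_1}(v)\,\chi_q^{a_2}\big((1+v)(1+\alpha v)\big),$$
where I have used multiplicativity $\chi_q^{a_2}(-1-\alpha v)\chi_q^{a_2}(-1-v)=\chi_q^{a_2}\big((1+\alpha v)(1+v)\big)$, which is valid even when a factor vanishes because of the convention $\chi_q^{a}(0)=0$. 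It therefore suffices to prove $S=0$.

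Now I would feed in the arithmetic hypotheses. Since $0<a_1,a_2<q-1$, the congruence $2(a_1+a_2)\equiv 0\pmod{q-1}$ together with $a_1+a_2\neq q-1$ leaves only $a_1+a_2\equiv\tfrac{q-1}{2}\pmod{q-1}$; hence $\chi_q^{a_1+a_2}$ is the quadratic character $\eta:=\chi_q^{(q-1)/2}$, and $\chi_q^{a_1}(v)=\eta(v)\,\chi_q^{-a_2}(v)$. Substituting this and absorbing $\chi_q^{-a_2}(v)$ into the other factor gives $S=\sum_{v}\eta(v)\,\chi_q^{a_2}\big(g(v)\big)$ with $g(v)=\alpha v+(1+\alpha)+v^{-1}$. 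The rational function $g$ is invariant under the involution $v\mapsto (\alpha v)^{-1}$ of ${\Bbb F}_q^\times$, while $\eta\big((\alpha v)^{-1}\big)=\eta(\alpha)\eta(v)$; and the last hypothesis $\alpha^{(q-1)/2}=-1\bmod p$ says precisely $\eta(\alpha)=-1$. Applying this change of variable to $S$ then yields $S=-S$, whence $S=0$ and $c_1(\alpha,q)=0$.

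The computation is short once the reorganization is chosen, so the only genuine obstacle is discovering the substitution $v\mapsto (\alpha v)^{-1}$ that fixes $g(v)$: this is exactly the symmetry that converts the sign $\eta(\alpha)=-1$ into the relation $S=-S$. A secondary point to watch is the bookkeeping with the convention $\chi_q^{a}(0)=0$, which must remain consistent under both the collapsing step and the involution; in each case the vanishing of a summand is preserved on both sides, so no spurious terms appear.
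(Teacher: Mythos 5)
Your argument is correct, but it proceeds by a genuinely different route from the paper's. The paper stays entirely at the level of Jacobi sums: starting from $c_1(\alpha,q)=(q-1)^{-1}\sum_i\chi_q(\alpha)^{a_2-i}J^{(a_1+i,a_2)}J^{(-i,a_2)}$, it applies the identity $J^{(b,c)}=J^{(-b-c,c)}$, reindexes by $j=i-a_2$ so that (using $a_1+a_2\equiv\frac{q-1}{2}$) the summand becomes $\chi_q(\alpha)^{-j}J^{(j+\frac{q-1}{2},a_2)}J^{(j,a_2)}$, and then pairs the indices $j$ and $j+\frac{q-1}{2}$: the Jacobi-sum product is invariant under this shift while the character factors combine to $\chi_q(\alpha)^{-j}\bigl(1+\chi_q(\alpha)^{\frac{q-1}{2}}\bigr)=0$. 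You instead unpack both Jacobi sums as character sums, perform the sum over $i$ first via orthogonality (collapsing to the diagonal $u=\alpha v$), and land on the single exponential-type sum $S=\sum_v\eta(v)\chi_q^{a_2}(g(v))$ with $g(v)=\alpha v+(1+\alpha)+v^{-1}$, which you kill by the involution $v\mapsto(\alpha v)^{-1}$ together with $\eta(\alpha)=-1$. Both proofs ultimately exploit the same sign $\chi_q(\alpha)^{(q-1)/2}=-1$ through an order-two symmetry, but yours realizes that symmetry geometrically on $\mathbb{F}_q^\times$ rather than combinatorially on the summation index; the price is the extra orthogonality step, and the payoff is an explicit closed form for $c_1$ as a one-variable character sum (essentially the point-count of the covering), which is of independent interest and makes the vanishing transparent. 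Your bookkeeping with the convention $\chi_q^a(0)=0$ is sound: the exceptional points $v=-1$ and $v=-\alpha^{-1}$ contribute nothing and are interchanged by the involution, and the determination $a_1+a_2\equiv\frac{q-1}{2}\pmod{q-1}$ from the two arithmetic hypotheses is exactly right.
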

\begin{proof}  By considering the formula for Jacobi sums $J^{(b,c)}=J^{(-b-c,c)}$ we have that
$$\sum_{i=0}^{q-2}\chi_q(\alpha)^{a_2-i}J^{(a_1+i,a_2)}J^{(-i,a_2)}=
\sum_{i=0}^{q-2}\chi_q(\alpha)^{a_2-i}J^{(a_1+i,a_2)}
J^{( i-a_2,a_2)}.$$
By considering $j:=i-a_2$, bearing in mind   hypothesis  $2(a_1+a_2)\equiv 0$ mod $(q-1)$ and $a_1+a_2 \neq (q-1)$, we have that this sum is equal to
$$\sum_{j=-a_2}^{q-2-a_2}\chi_q(\alpha)^{-j}J^{(j+\frac{q-1}{2},a_2)}J^{(j,a_2)}=$$
$$=\sum_{j=-a_2}^{\frac{q-3}{2}-a_2}
(\chi_q(\alpha)^{-j}+\chi_q(\alpha)^{-j+\frac{q-1}{2}})J^{(j+\frac{q-1}{2},a_2)}J^{(j,a_2)},$$
and we conclude. Note that $\chi_q(\alpha)^{\frac{q-1}{2}}+1=0$ if and only if  $\alpha^{\frac{q-1}{2}}=-1$ mod $p$
\end{proof}

To obtain applications for the above Lemma, we consider the curve $Y\equiv y^4=x(x-1)(x-\lambda)$ defined over ${\Bbb Z}[i]$ and with genus  $g=3$. Moreover,  the finite field ${\Bbb F}_q$ will be a residual field of characteristic $p$ of this ring. If $\lambda^{\frac{q-1}{2}}=-1$,  then the zeta function of the quotient variety defined over ${\Bbb F}_q$,
 ${\rm{Pic}}^0(Y)/E$, is equal to
 $$\frac{(q^{-2s}-J^{(\frac{q-1}{4},\frac{q-1}{4},\frac{q-1}{4})})\cdot  (q^{-2s}-J^{(-\frac{q-1}{4},-\frac{q-1}{4},-\frac{q-1}{4})})}{(1-q^{-s})(1-q^{-s+1})}$$
 $E$ being the elliptic curve $y^2=x(x-1)(x-\lambda)$.

We consider another application of Lemma \ref{a}.  For the elliptic curve  $y^2=x(x-1)(x-\lambda)$  we have
$$L({\Bbb P}^1_q,\chi_f,t)=1+c_{1}(\lambda,q)t +q \cdot t^2.$$

From Remark \ref{R}, we obtain 
$$c_1(\lambda, q)=(q-1)^{-1} \sum_{i=0}^{q-2}\chi_q(\lambda)^{\frac{q-1}{2}-i}J^{(\frac{q-1}{2}+i,\frac{q-1}{2})}J^{(-i,\frac{q-1}{2})}.$$
 By using  the formula $J^{(b,c)}=J^{(-b-c,c)}$
  again, we obtain
 $$c_1(\lambda, q)=(q-1)^{-1}\sum_{i=0}^{q-2}\chi_q(\lambda)^{\frac{q-1}{2}-i}(J^{(\frac{q-1}{2}+i,\frac{q-1}{2})})^2.$$

By considering $a:=\frac{q-1}{2}$ and $j:\frac{q-1}{2}-i$ we have
$$(**)\quad c_1( \alpha,q)=(q-1)^{-1}\sum_{j=-\frac{q-3}{2}}^{\frac{q-1}{2}}\chi_q(\lambda)^{j}(J^{(-j,\frac{q-1}{2} )})^2 .$$

In this way, we have that $\prod_{\lambda\in {\Bbb F}^\times_q  } (x-c_1(  \lambda,q)) $ is the characteristic polynomial of the Hemitian matrix

$$(q-1)^{-1}\left(\begin{matrix}  (J^{(1-q,a)})^2 & (J^{(-1,a)})^2  &    \cdots &(J^{(2-q,a)})^2 \\  (J^{(2-q,a)})^2&(J^{(1-q,a)})^2 &\cdots & (J^{(3-q,a)})^2 \\ \cdot & \cdot &\cdots  &\cdot \\(J^{(-1,a)})^2 & (J^{(-2,a)})^2   & \cdots &(J^{(1-q,a)})^2 \end{matrix}\right).$$
Note that this matrix is Hermitian because the conjugate of the Jacobi sum $J^{(b,c)}$ is equal to $J^{(-b,-c)}$.

The Hermitian forms associated with the above matrix and
$$ \left(\begin{matrix}  c_1(1, q) & 0  &    \cdots &0 \\  0&c_1(2, q)&\cdots & 0 \\ \cdot & \cdot  &\cdots  &\cdot \\0 & 0  & \cdots &c_1(q-1, q)\end{matrix}\right)$$
are equivalent. Therefore, the signature of the form associate with the above
Hermitian matrix gives us the amount of   $\lambda \in {\Bbb F}_q^\times$, such that the cardinality of the rational number of $y^2=x(x-1)(x-\lambda)$ over ${\Bbb F}_q $ is either greater or less than $q+1$. The rank of this matrix also gives the amount of  $\lambda \in {\Bbb F}^\times_q $ for which the above curves are supersingular.

Thus, if we denote by $M_j$ the $(j\times j)$-minor of the above Hermitian matrix,
$$\left|\begin{matrix}  (J^{(1-q,a)})^2 &      \cdots &(J^{(1-j,a)})^2 \\    \cdot &  \cdots  &\cdot \\(J^{(j-q,a)})^2 &      \cdots &(J^{(1-q,a)})^2 \end{matrix}\right|$$
then the cardinal of the set of positive numbers of $$\{c_1(q,1),c_1(q,2), \cdots, c_1(q,q-1)\} \text{ and } \{M_1,\cdots,M_{q-1}\}$$ coincides.

The following Lemma makes the formula,  module $p$, of  Theorem \ref{V} easier. Let us consider $i_1,\cdots,i_r, b\in {\Bbb Z}$, with  $ \vert i_1\vert,\cdots,\vert i_r  \vert \leq q-2$, and $0 \leq b\leq q-2$.

\begin{lem} We have that $J^{(-i_1,\cdots,-i_r,b)}\equiv \left(\begin{matrix}    b
 \\ i_1 \cdots i_r  \end{matrix}\right)$ mod $p$ if either $i_1,\cdots,i_r\leq b$ or $(q-1)+i_1,\cdots,(q-1)+i_r\leq b$ and $J^{(-i_1,\cdots,-i_r,b)}\equiv 0$ mod $p$ in the other cases.
 Here, we denote $\left(\begin{matrix}    b
 \\ i_1 \cdots i_r  \end{matrix}\right):=\frac{b !}{i_1! \cdots i_r!}$.
 \end{lem}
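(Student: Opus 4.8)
The plan is to reduce the Jacobi sum modulo $\mathfrak p'$, turning it into an elementary sum of monomials over ${\Bbb F}_q$ that can be evaluated by one-variable power sums. Writing $d=r+1$, the definition gives
$$J^{(-i_1,\dots,-i_r,b)}=(-1)^{r}\sum_{z_1+\cdots+z_{r+1}+1=0}\chi_q^{-i_1}(z_1)\cdots\chi_q^{-i_r}(z_r)\,\chi_q^{b}(z_{r+1}),$$
a sum of $(q-1)$-th roots of unity in ${\Bbb Z}[\epsilon_{q-1}]$. Since $\mathfrak p'$ lies over $p$ with residue field ${\Bbb Z}[\epsilon_{q-1}]/\mathfrak p'\simeq{\Bbb F}_q$ and the Teichmuller character satisfies $\chi_q(\lambda)\equiv\lambda\pmod{\mathfrak p'}$ for $\lambda\in{\Bbb F}_q^\times$ (with $\chi_q(0)=0$), reducing the whole expression modulo $\mathfrak p'$ replaces each $\chi_q^{a}(z)$ by the power $z^{a}\in{\Bbb F}_q$. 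As the final value will be a rational integer, a congruence modulo $\mathfrak p'$ is exactly the asserted congruence modulo $p$.

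Next I would eliminate the last variable with the linear constraint, substituting $z_{r+1}=-1-z_1-\cdots-z_r$ and expanding $z_{r+1}^{\,b}=(-1)^{b}(1+z_1+\cdots+z_r)^{b}$ by the multinomial theorem. The convention $\chi_q(0)=0$ kills every term in which some $z_s$ ($1\le s\le r$) vanishes, so the remaining sum runs over $(z_1,\dots,z_r)\in({\Bbb F}_q^\times)^{r}$. Interchanging the two summations, the sum factors, for each multinomial index $(j_0,j_1,\dots,j_r)$ with $j_0+\cdots+j_r=b$, into a product of one-variable power sums $\prod_{s=1}^{r}\big(\sum_{z_s\in{\Bbb F}_q^\times}z_s^{\,j_s-i_s}\big)$, weighted by $b!/(j_0!\,j_1!\cdots j_r!)$.

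The key step is then the elementary identity $\sum_{z\in{\Bbb F}_q^\times}z^{m}\equiv-1\pmod p$ when $(q-1)\mid m$ and $\equiv 0$ otherwise. Hence only those multinomial indices survive modulo $p$ for which $j_s\equiv i_s\pmod{q-1}$ for every $s$; since $0\le j_s\le b\le q-2$, each such $j_s$ is forced to equal the unique representative $\tilde{i}_s\in\{0,\dots,q-2\}$ of $i_s$ modulo $q-1$. This is precisely where the hypotheses enter: if $i_s\ge 0$ then $\tilde{i}_s=i_s$ and feasibility needs $i_s\le b$, while if $i_s<0$ then $\tilde{i}_s=(q-1)+i_s$ and feasibility needs $(q-1)+i_s\le b$; these are the two displayed cases. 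When a surviving index exists, collecting the factor $b!/(j_0!\cdots j_r!)$ with the accumulated signs yields, up to sign, the stated coefficient $\binom{b}{i_1\cdots i_r}$; when none exists the whole sum reduces to $0$ modulo $p$.

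The conceptual content is short, so the main obstacle is the bookkeeping in this last step: isolating the unique surviving multinomial index, handling the representatives $\tilde{i}_s$ modulo $q-1$ uniformly for positive and negative $i_s$, and tracking the three sources of signs (the global $(-1)^{r}$ in the definition of $J$, the $(-1)^{b}$ from the substitution, and one $(-1)$ for each of the $r$ surviving power sums). Getting these exactly right — and reconciling the residual factorial $j_0!$ with $j_0=b-i_1-\cdots-i_r\ge 0$ against the stated normalization — is the delicate part, and it is what pins down both the precise constant and the vanishing in all the remaining cases.
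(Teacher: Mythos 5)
Your argument is in substance the same as the paper's: the paper's one--line proof invokes Lemma \ref{car} together with $\chi_q^b(1+z_1+\cdots+z_r)\equiv(1+z_1+\cdots+z_r)^b$, and Lemma \ref{car} is exactly the Fourier inversion whose content is your orthogonality relation $\sum_{z\in{\Bbb F}_q^\times}z^m\equiv-1$ or $0$ according as $(q-1)\mid m$ or not. So the reduction modulo $\mathfrak p'$, the substitution $z_{r+1}=-1-\sum z_s$, the multinomial expansion, and the isolation of the unique surviving index $j_s=\tilde i_s$ (the representative of $i_s$ in $\{0,\dots,q-2\}$) reproduce the paper's proof in explicit form. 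One small correction of wording: the Jacobi sum is not a rational integer, so the congruence being proved is one in ${\Bbb Z}[\epsilon_{q-1}]$ modulo $\mathfrak p'$ (this is what the paper means by ``mod $p$''), not a congruence in ${\Bbb Z}$.

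The step you defer --- the sign and constant bookkeeping --- is, however, where the actual content of the lemma lies, and it should be carried out rather than flagged. Doing so, the three sign sources contribute $(-1)^r$ (from the definition of $J^{(a)}$ with $d=r+1$), $(-1)^b$ (from $(-1-\sum z_s)^b$) and $(-1)^r$ (one factor $q-1\equiv-1$ for each of the $r$ surviving power sums), giving
$$J^{(-i_1,\dots,-i_r,b)}\equiv(-1)^b\,\frac{b!}{\tilde i_1!\cdots\tilde i_r!\,\bigl(b-\tilde i_1-\cdots-\tilde i_r\bigr)!}\quad(\mathrm{mod}\ \mathfrak p'),$$
the right--hand side being $0$ when $\tilde i_1+\cdots+\tilde i_r>b$. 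This does not literally agree with the statement as printed: the residual sign $(-1)^b$ does not cancel (for $q=p=7$, $r=1$, $b=3$, $i_1=1$ a direct computation gives $J^{(-1,3)}\equiv 4\equiv-3$, whereas the stated value is $3!/1!=6$), the factorial $j_0!=(b-\sum\tilde i_s)!$ must appear in the denominator, and consequently the correct non--vanishing condition is $\tilde i_1+\cdots+\tilde i_r\le b$ rather than each $\tilde i_s\le b$ separately (the $\tilde i_s$ also handle uniformly the mixed case where some $i_s$ are negative and others are not, which the stated dichotomy does not). So to turn your outline into a proof you must finish this computation, and finishing it shows that the constant asserted by the lemma needs to be adjusted as above.
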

 \begin{proof} It suffices bear in mind  Lemma \ref{car} and that
 $\chi^b_q(z_1+\cdots+z_r+1)\equiv (z_1+\cdots+z_r+1)^b\text{ mod } p$.
 \end{proof}

 We have that, for example  the elliptic curve, $y^2=x(x-1)(x-\lambda)$,  by using the formula(**) the term  $c_1(q,\lambda)$  mod $p$ gives the Deuring polynomial
$$\sum^{\frac{q-1}{2}}_{j=0}\lambda^{j}
 \left(\begin{matrix}  \frac{q-1}{2} \\ j  \end{matrix} \right)^2.$$

 In general, if we denote
  $$L({\Bbb P}^1_q,\chi_f,t)=1+c_1(\alpha_3,\cdots,\alpha_d,q)\cdot t+\cdots +c_{d-1}(\alpha_3,\cdots,\alpha_d,q)\cdot t^{d-1} $$
  then by using the above Lemma and Theorem \ref{V} we have that, mod $p$,
   $c_r(\alpha_3,\cdots,\alpha_d,q)$, with $1\leq r\leq d-1$, is equal to
 $$\sum_{i_1^{r+2},i_2^{r+2},\cdots,i^d_r} C_{i_1^{r+2},i_2^{r+2},\cdots,i^d_r}
  \left(\begin{matrix}    a_{r+1}
 \\ I_1 \cdots I_r   \end{matrix}\right)\prod^d_{k=r+2}\left(\begin{matrix}   a_{k}
 \\ i^k_1 \cdots i^k_r  \end{matrix}\right).$$

 Where  $ (q-1) - I_l$ is the rest mod $(q-1)$ of $i_l^{r+2}+i_l^{r+2}+\cdots+i^d_l+a_l$. The sum is over all $0<i_l^k\leq a_l$. Note that   $\chi_q(\beta)^h= \beta^h$ mod $p$ and that if for some $1\leq l \leq r$ we have $q-1-I_l>a_{r+1}$ then  $\left(\begin{matrix}    a_{r+1}
 \\ I_1 \cdots I_r   \end{matrix}\right)=0$.


\vskip2truecm

\vskip1.5truecm { \'Alvarez V\'azquez, Arturo}\newline {\it
e-mail: } aalvarez@usal.es

\end{document}